\documentclass{amsart}

\usepackage{amsmath}
\usepackage{amssymb}
\usepackage{amsthm}
\usepackage{mathtools}

\usepackage{tikz}
\usepackage{subcaption}

\setlength{\parindent}{0em}
\setlength{\parskip}{0.6em}%

\newtheorem{theorem}{Theorem}
\newtheorem{lemma}[theorem]{Lemma}
\newtheorem*{theorem*}{Theorem}

\usepackage{myshortcuts}

\begin{document}

\title{A Sharp Fourier Inequality \\and the Epanechnikov Kernel}
\author{Sean Richardson}
\address{Sean Richardson, Department of Mathematics, University of Washington, Seattle, WA}
\email{seanhr@uw.edu}
\pagestyle{plain}

\begin{abstract}
	We consider functions $f: \Z \to \R$ and kernels $u: \{-n, \cdots, n\} \to \R$ normalized by $\sum_{\ell = -n}^{n} u(\ell) = 1$, making the convolution $u \ast f$ a ``smoother'' local average of $f$. We identify which choice of $u$ most effectively smooths the second derivative in the following sense. For each $u$, basic Fourier analysis implies there is a constant $C(u)$ so  $\ltwo{\Delta(u \ast f)} \leq C(u)\ltwo{f}$ for all $f: \Z \to \R$. By compactness, there is some $u$ that minimizes $C(u)$ and in this paper, we find explicit expressions for both this minimal $C(u)$ and the minimizing kernel $u$ for every $n$. The minimizing kernel is remarkably close to the Epanechnikov kernel in Statistics. This solves a problem of Kravitz-Steinerberger and an extremal problem for polynomials is solved as a byproduct.
\end{abstract}

\maketitle

\section{Introduction}
We are interested in the study of functions $f:\mathbb{Z} \rightarrow \mathbb{R}$. These functions appear naturally in many applications (for example as ``time series'') and a natural problem that arises frequently is to take local averages at a fixed, given scale. A popular way is to fix a kernel $u: \{-n, -n+1, \cdots, n-1, n\} \to \R$ and consider the convolution
$$ (u \ast f)(k) = \sum_{\ell = -n}^{n} u(\ell) f(k-\ell).$$
A natural question is now which kernel $u: \{-n, \cdots, n\} \to \R$ one should pick. We will always assume that the kernel is normalized $\sum_{\ell = -n}^{n} u(\ell) = 1$ so that the convolution $u \ast f$ is indeed a local average. There is no ``right choice'' of a kernel $u$ as different choices of weights are optimal in different ways. For example, in image processing theory \cite{lindeberg-dss} one is interested in kernels $u$ so that for any $f$, the convolution $u \ast f$ has fewer local extrema than $f$; this property together with the other ``scale-space axioms'' uniquely characterizes the Gaussian kernel \cite{uniqueness-of-gaussian, koenderink1984structure, scaling-thms}. In kernel density estimation, one is interested in a kernel that minimizes least-squared error, which uniquely characterizes the Epanechnikov kernel \cite{epanechnikov}. In this paper, we build on the work of Kravitz and Steinerberger \cite{smoothest-avg} and take the approach of asking that the convolution $u \ast f$ be as smooth as possible, which we show uniquely characterizes yet another kernel in the second derivative case.

To make this precise, first define the discrete derivative $Df:\mathbb{Z} \rightarrow \mathbb{R}$ by $Df(k) = f(k+1) - f(k)$ and define higher-order derivatives inductively by $D^m f = D (D^{m-1}f)$. Then it follows from basic Fourier analysis (see Section~\ref{sec:avg-to-poly}) that for every kernel $u: \{-n, \cdots, n\} \to \R$ and every $m \in \mathbb{N}$, there exists a constant $C_{m}(u) < \infty$ so that
$$ \forall~f \in \ell^2(\mathbb{Z}), \qquad \ltwo{D^{m}(u \ast f)} \leq C_m(u) \ltwo{f}.$$
We can now ask a natural question.
\begin{quote}
\textbf{Question.} Given a positive integer $m$, how small can $C_{m}(u)$ be and which convolution kernels $u$ attain the optimal constant?
\end{quote}
Such a kernel would then be the ``canonical'' kernel producing the smallest $m$th derivatives and is a natural candidate for use in practice. The problem has been solved by Kravitz and Steinerberger when $m=1$.
\begin{theorem*}[Kravitz-Steinerberger \cite{smoothest-avg}]
For any normalized $u: \{-n, \cdots, n\} \to \R$,
$$C_{1}(u)  \geq \frac{2}{2n+1}$$
with equality if and only if $u(k) = 1/(2n+1)$ is the constant kernel.
\end{theorem*}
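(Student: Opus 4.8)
The plan is to convert the question into an extremal problem for polynomials on the unit circle, exhibit the conjectured optimizer, and then certify optimality with a matching quadrature identity that serves as a dual certificate.

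First I would pass to the Fourier side. Writing $\hat f(\theta)=\sum_k f(k)e^{-ik\theta}$ and combining Parseval with the multiplier identities $\widehat{u\ast f}=\hat u\,\hat f$ and $\widehat{Df}(\theta)=(e^{i\theta}-1)\hat f(\theta)$, the sharp constant is the sup norm of the symbol, $C_1(u)=\max_{\theta}\bigl|(e^{i\theta}-1)\hat u(\theta)\bigr|$ (this is precisely the reduction of Section~\ref{sec:avg-to-poly}). Substituting $z=e^{i\theta}$ and clearing $z^{-n}$, set $Q(z)=z^n\hat u=\sum_{\ell=-n}^{n}u(\ell)z^{n-\ell}$, a real polynomial of degree $\le 2n$ with $Q(1)=\sum_\ell u(\ell)=1$; on the unit circle $C_1(u)=\max_{|z|=1}|(z-1)Q(z)|$. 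Writing $R=(z-1)Q$, the problem becomes: over real polynomials $R$ with $\deg R\le 2n+1$, $R(1)=0$, and $R'(1)=Q(1)=1$, minimize $\max_{|z|=1}|R(z)|$. The constant kernel corresponds to $R_0(z)=(z^{2n+1}-1)/(2n+1)$, for which $\max_{|z|=1}|R_0|=2/(2n+1)$, so it remains to prove this value is a lower bound attained only here.

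For the lower bound I would build a quadrature identity for the functional $R\mapsto R'(1)$ supported on the circle. The natural nodes are the $2n+1$ roots of $z^{2n+1}=-1$, namely $z_k=e^{i\theta_k}$ with $\theta_k=(2k+1)\pi/(2n+1)$, since these are exactly where $|R_0|$ is maximal. A nonzero $R$ of degree $\le 2n+1$ with $R(1)=0$ cannot vanish at all $2n+1$ nodes (that would give $2n+2$ roots), so the evaluations $R\mapsto R(z_k)$ form a basis for the dual of $\{R:\deg R\le 2n+1,\ R(1)=0\}$, and there are unique weights $w_k$ with $R'(1)=\sum_{k}w_k R(z_k)$ whenever $R(1)=0$. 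Solving for the $w_k$ by inverse DFT on the basis $\{z^j-1\}$ is the computational core; I expect the clean closed form $w_k=-\bigl[\,2(2n+1)\sin^2(\theta_k/2)\,\bigr]^{-1}$, so that all $w_k<0$ and $\sum_k|w_k|=(2n+1)/2$. Granting this,
$$1=R'(1)=\Bigl|\sum_k w_k R(z_k)\Bigr|\le\Bigl(\sum_k|w_k|\Bigr)\max_{|z|=1}|R(z)|=\frac{2n+1}{2}\max_{|z|=1}|R(z)|,$$
which gives $C_1(u)=\max_{|z|=1}|R(z)|\ge 2/(2n+1)$.

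The equality case falls out of the same inequality. Equality forces $|R(z_k)|=\max_{|z|=1}|R|=2/(2n+1)$ at every node and forces the terms $w_kR(z_k)$ to share a common phase; since the $w_k$ are real and negative and the sum is $1$, this pins down $R(z_k)=-2/(2n+1)$ for all $k$. As $R_0$ takes the same value $-2/(2n+1)$ at each $z_k$ and $R(1)=R_0(1)=0$, the difference $R-R_0$ has degree $\le 2n+1$ yet vanishes at the $2n+2$ distinct points $\{z_0,\dots,z_{2n},1\}$, so $R\equiv R_0$. Translating back, $Q=\tfrac{1}{2n+1}(1+z+\cdots+z^{2n})$, i.e. $u(\ell)=1/(2n+1)$, the constant kernel. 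The main obstacle is exactly the quadrature construction: guessing the nodes and, above all, proving the weights are of one sign with total mass $(2n+1)/2$, since that sign property is what makes the triangle inequality tight and simultaneously powers the uniqueness step. The Fourier reduction and the root-counting are routine once this dual certificate is in hand.
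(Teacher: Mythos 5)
The paper never proves this statement---it is quoted as background from Kravitz--Steinerberger \cite{smoothest-avg}, and the paper's own machinery (Lemma~\ref{lemma:symmetric-kernels}, Lemma~\ref{prop:avg-to-poly}, Theorem~\ref{thm:min-poly}) is deployed only for the second-derivative case---so your argument is necessarily an independent proof, and it is correct. Your Fourier reduction is the same computation as in Lemma~\ref{prop:avg-to-poly}, but afterwards you diverge from the paper's template: where the paper symmetrizes the kernel, substitutes $x=\cos\xi$, and runs a Chebyshev equioscillation/intersection-counting argument on $[-1,1]$, you stay on the unit circle with the full degree-$2n$ polynomial $Q$ and certify optimality by a one-signed quadrature formula at the roots of $z^{2n+1}=-1$. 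Two remarks. First, the step you flag as the main obstacle is fine: your conjectured weights are correct, and the deferred derivation is routine Lagrange interpolation at the $2n+2$ nodes $\{1,z_0,\dots,z_{2n}\}$, which gives $w_k=L_k'(1)$; using $\prod_{j}(z-z_j)=z^{2n+1}+1$ one gets
$$w_k=\frac{-2}{(2n+1)(1-z_k)^2 z_k^{2n}}=\frac{2z_k}{(2n+1)(1-z_k)^2}=-\frac{1}{2(2n+1)\sin^2(\theta_k/2)}<0,$$
and you do not even need a cosecant-sum identity for the total mass: applying your own quadrature identity to $R_0$, which satisfies $R_0'(1)=1$ and $R_0(z_k)=-2/(2n+1)$ for every $k$, yields $\sum_k|w_k|=(2n+1)/2$ for free. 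Second, your route buys something the paper's approach does not give directly: since you never symmetrize, the identity $C_1(u)=\max_{|z|=1}|(z-1)Q(z)|$ and the uniqueness argument apply to arbitrary real kernels at once, whereas the paper's Lemma~\ref{lemma:symmetric-kernels} transfers only the inequality (not the equality characterization) from symmetric kernels to general ones; conversely, the paper's Chebyshev method is what generalizes to the weighted problem $\max|(1-x)^m p(x)|$ needed for $m=2$, where the extremizer is no longer an explicit cyclotomic-type polynomial and the one-signed node set is harder to guess.
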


That is, averaging by convolving with the characteristic function of an interval best minimizes the first derivative. Kravitz and Steinerberger also studied the $m=2$ case under the assumption $u$ has non-negative Fourier transform.

\begin{theorem*}[Kravitz-Steinerberger \cite{smoothest-avg}]
For any normalized $u: \{-n, \cdots, n\} \to \R$ with nonnegative Fourier transform,
$$C_{2}(u) \geq \f{4}{(n+1)^2}$$
with equality if and only if $u$ is the triangle function $u(k) = (n+1-|k|)/(n+1)^2.$
\end{theorem*}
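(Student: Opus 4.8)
The plan is to pass to the Fourier side, turn the inequality into an extremal problem for polynomials on $[-1,1]$, and solve that problem by an equioscillation argument built around the Chebyshev polynomial $T_{n+1}$.

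First I would make the symbol of the operator explicit. Writing $\widehat u(\theta) = \sum_{\ell=-n}^{n} u(\ell) e^{-i\ell\theta}$, the multiplier of $D$ is $e^{i\theta}-1$ and the multiplier of convolution with $u$ is $\widehat u$, so by Plancherel the optimal constant is the sup-norm of the symbol (as in Section~\ref{sec:avg-to-poly}):
\[
C_2(u) = \sup_{\theta \in [-\pi,\pi]} \left| (e^{i\theta}-1)^2 \widehat u(\theta) \right| = \sup_{\theta} (2 - 2\cos\theta)\, |\widehat u(\theta)|.
\]
Crucially, the hypothesis $\widehat u \ge 0$ forces $\widehat u$ to be real and even (so that $u$ is symmetric), which lets me drop the absolute value and write $C_2(u) = \sup_\theta (2-2\cos\theta)\,\widehat u(\theta)$.

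Next I would substitute $x = \cos\theta \in [-1,1]$. Since $\widehat u$ is an even cosine polynomial of degree at most $n$, the identity $\cos(\ell\theta) = T_\ell(\cos\theta)$ expresses it as $\widehat u(\theta) = Q(\cos\theta)$ for a unique real polynomial $Q$ with $\deg Q \le n$; the normalization becomes $Q(1) = 1$, nonnegativity of $\widehat u$ becomes $Q \ge 0$ on $[-1,1]$, and $2 - 2\cos\theta = 2(1-x)$. Setting $R(x) = 2(1-x)Q(x)$, the whole problem becomes the following clean extremal problem: among polynomials $R$ with $\deg R \le n+1$ satisfying $R \ge 0$ on $[-1,1]$, $R(1) = 0$, and $R'(1) = -2$, minimize $\max_{[-1,1]} R$. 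I would then exhibit the candidate extremizer
\[
R^\ast(x) = \frac{2}{(n+1)^2}\bigl(1 - T_{n+1}(x)\bigr)
\]
and verify it is admissible: $T_{n+1}(1)=1$ gives $R^\ast(1)=0$, $T_{n+1}'(1) = (n+1)^2$ gives $R^{\ast\prime}(1) = -2$, and $T_{n+1} \le 1$ on $[-1,1]$ gives $R^\ast \ge 0$ with $\max R^\ast = 4/(n+1)^2$, attained where $T_{n+1} = -1$. Dividing back by $2(1-x)$ recovers the normalized Fej\'er kernel, i.e.\ the triangle function, whose symbol is the nonnegative expression $\frac{4}{(n+1)^2}\sin^2((n+1)\theta/2)$; this already certifies $C_2 = 4/(n+1)^2$ for the triangle kernel.

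The crux is proving optimality, which I would phrase as: any admissible $R$ with $\max_{[-1,1]} R \le M^\ast := 4/(n+1)^2$ must equal $R^\ast$ (this yields both the lower bound and uniqueness at once). I would study $D = R^\ast - R$, a polynomial of degree at most $n+1$. At the Chebyshev nodes $x_k = \cos(k\pi/(n+1))$, $k = 0, \dots, n+1$, the extremizer $R^\ast$ alternates between $0$ (for even $k$) and $M^\ast$ (for odd $k$), so $D(x_k) = M^\ast - R(x_k) \ge 0$ for odd $k$ and $D(x_k) = -R(x_k) \le 0$ for even $k$; in addition $D(1) = D'(1) = 0$, so $x_0 = 1$ is a double root. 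The weak sign alternation forces roots between consecutive nodes, and the key structural observation is that at any interior node where $D$ vanishes, $R$ simultaneously attains its global maximum $M^\ast$ or its minimum $0$, making that node a critical point of both $R$ and $R^\ast$ and hence a double root of $D$. Tallying these contributions produces at least $n+2$ roots counted with multiplicity for a polynomial of degree at most $n+1$, which forces $D \equiv 0$. I expect the main obstacle to be making this zero-count fully rigorous at the degenerate nodes and at the endpoint $x_{n+1} = -1$, where one must decide carefully whether a vanishing node contributes a simple or a double zero; either careful bookkeeping or a small perturbation reducing to the strict-alternation case settles it. Once $D \equiv 0$ is forced, $R = R^\ast$, giving $C_2(u) \ge 4/(n+1)^2$ with equality exactly for the triangle kernel.
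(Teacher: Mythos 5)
Note first that this paper does not actually prove the statement you were given: it is quoted from Kravitz--Steinerberger \cite{smoothest-avg}, so there is no in-paper proof to compare against. Judged on its own, your argument is correct, and it is essentially the constrained analogue of the machinery this paper uses for its main theorem. Your Fourier reduction is Lemma~\ref{prop:avg-to-poly}, except that here the symmetry of $u$ comes for free from $\widehat u \ge 0$ (a nonnegative transform of a real kernel is real and even), so nothing like Lemma~\ref{lemma:symmetric-kernels} is needed; and your zero-counting is the equioscillation argument of Theorem~\ref{thm:min-poly} with two changes dictated by the extra constraint: the extremizer is the Fej\'er-type polynomial $R^\ast(x) = \tfrac{2}{(n+1)^2}\bigl(1-T_{n+1}(x)\bigr)$ rather than the stretched Chebyshev polynomial $q_n(x) = -\alpha T_n(L(x))$, and the one-sided bounds $0 \le R \le 4/(n+1)^2$ replace the two-sided bound $|(1-x)p(x)| \le \alpha$ in forcing the sign alternation of $D = R^\ast - R$ at the nodes $x_k = \cos(k\pi/(n+1))$. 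What the positivity constraint buys is exactly what you observe: $R \ge 0$ pins $R$ at its minimum at even nodes and $R \le \max R^\ast$ pins it at its maximum at odd nodes, so any interior node where $D$ vanishes is a critical point of both $R$ and $R^\ast$ and hence a double zero. The bookkeeping you flag does close, by the same device as in the paper's proof of Theorem~\ref{thm:min-poly}: assign one zero to each of the $n$ intervals $[x_{k+1},x_k]$, $k=1,\dots,n$; a zero shared by two adjacent intervals sits at an interior node and therefore counts with multiplicity two, while the endpoint $-1$ lies in only one interval, so no double-counting occurs; adding the double zero at $x_0=1$ (from $R(1)=0$, $R'(1)=-2$ matching $R^\ast$) gives at least $n+2$ zeros for a polynomial of degree at most $n+1$, forcing $D \equiv 0$. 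The only step worth writing out more explicitly is the very last one: passing from $R = R^\ast$ back to $u$ being the triangle kernel uses that a symmetric kernel is uniquely determined by the Chebyshev coefficients of $Q(x) = R(x)/(2(1-x))$, i.e., by its cosine polynomial.
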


The main result of this paper resolves the $m=2$ case without additional assumptions on the kernel, providing the optimal constant and optimal kernel for all $n$.

\begin{theorem*}[Main Result]
For any normalized $u: \{-n, \cdots, n\} \to \R,$
$$ C_2(u)  \geq \frac{4}{n+1}\f{\sin(\frac{\pi}{2n+2})}{1+\cos(\frac{\pi}{2n+2})}$$
with equality if and only if $u$ is given by the kernel in (\ref{eq:optimal-u}). 
\nonumber
\end{theorem*}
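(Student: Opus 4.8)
The plan is to pass to the Fourier side, where everything diagonalizes. Writing $\hat u(\theta) = \sum_{\ell=-n}^n u(\ell)e^{i\ell\theta}$, the operator $D$ acts as multiplication by $e^{i\theta}-1$, so Plancherel on $\ell^2(\Z)$ (as recalled in Section~\ref{sec:avg-to-poly}) gives
\[ C_2(u) = \max_{\theta\in[0,2\pi]} |e^{i\theta}-1|^2\,|\hat u(\theta)| = \max_{\theta} 4\sin^2(\theta/2)\,|\hat u(\theta)|. \]
First I would reduce to even kernels: replacing $u$ by its symmetrization $\tfrac12(u(\ell)+u(-\ell))$ replaces $\hat u$ by $\operatorname{Re}\hat u$ and hence can only decrease $C_2(u)$, while preserving the normalization $\hat u(0)=1$; one checks the inequality is strict unless $u$ is already even, which will deliver the ``only if'' direction. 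For even $u$ we may write $\hat u(\theta) = P(\cos\theta)$ with $P$ a real polynomial of degree $\le n$ satisfying $P(1)=1$, and since $4\sin^2(\theta/2) = 2(1-\cos\theta)$ the substitution $x=\cos\theta\in[-1,1]$ turns the problem into a polynomial extremal problem:
\[ C_2(u) = 2\max_{x\in[-1,1]} (1-x)\,|P(x)|, \qquad \deg P \le n,\quad P(1)=1. \]

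Next I would clear the weight by setting $Q(x) = (1-x)P(x)$. This is a bijection between admissible $P$ and polynomials $Q$ of degree $\le n+1$ with $Q(1)=0$ and $Q'(1) = -1$, under which $C_2(u) = 2\|Q\|_{\infty,[-1,1]}$. Thus the whole theorem reduces to the following sharp, constrained Chebyshev problem: among all $Q$ with $\deg Q\le n+1$, $Q(1)=0$ and $Q'(1)=-1$, minimize $\|Q\|_{\infty,[-1,1]}$. Equivalently one maximizes $|Q'(1)|$ over polynomials bounded by $1$ that vanish at the endpoint $x=1$; this is a Markov-type inequality with an interpolation constraint, and solving it is the heart of the matter.

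The construction I expect to work --- and this guess is the main creative step --- is a Chebyshev polynomial on a slightly enlarged interval. Put $\phi = \tfrac{\pi}{2n+2}$ and $b = \tfrac{3-\cos\phi}{1+\cos\phi}>1$, and let $L$ be the affine map sending $[-1,b]$ onto $[-1,1]$, chosen so that $L(1) = \cos\phi$ is precisely the largest root of $T_{n+1}$. Then $Q_0(x) := T_{n+1}(L(x))$ satisfies $|Q_0|\le 1$ on $[-1,b]\supseteq[-1,1]$ and $Q_0(1) = T_{n+1}(\cos\phi) = \cos\tfrac\pi2 = 0$, while $T_{n+1}'(\cos\phi) = (n+1)/\sin\phi$ (using $(n+1)\phi=\tfrac\pi2$) together with $L'(1) = \tfrac{1+\cos\phi}{2}$ gives $Q_0'(1) = \tfrac{n+1}{2}\cdot\tfrac{1+\cos\phi}{\sin\phi}=:\mu$. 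Rescaling to $Q^\ast = -\mu^{-1}Q_0$ produces an admissible competitor with $\|Q^\ast\|_{\infty,[-1,1]} = \mu^{-1}$, giving exactly $C_2 = 2\mu^{-1} = \tfrac{4}{n+1}\cdot\tfrac{\sin\phi}{1+\cos\phi}$, the claimed value.

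Finally I would prove this competitor is optimal and unique by a zero-counting argument, which I expect to be routine once the extremizer is in hand. The extrema of $T_{n+1}(L(x))$ lying in $[-1,1]$ occur at exactly $n+1$ points of $[-1,1)$, where $Q^\ast$ takes the values $\pm\mu^{-1}$ with alternating signs. If some admissible $\tilde Q$ had $\|\tilde Q\|_{\infty,[-1,1]} < \mu^{-1}$, then $D := Q^\ast-\tilde Q$ would inherit the sign of $Q^\ast$ at each of these points, forcing at least $n$ sign changes and hence $n$ zeros in $(-1,1)$; but $D(1)=D'(1)=0$ contributes a double zero at $x=1$, so $D$ would have at least $n+2$ zeros counted with multiplicity while $\deg D\le n+1$, whence $D\equiv 0$. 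This yields minimality and uniqueness of $Q^\ast$, hence of the optimal even kernel; transforming back via $P^\ast(x)=Q^\ast(x)/(1-x)$ and $\hat u^\ast(\theta)=P^\ast(\cos\theta)$ and reading off Fourier coefficients produces the explicit kernel in (\ref{eq:optimal-u}). The one genuinely hard point is guessing the enlarged-interval Chebyshev extremizer; the Fourier reduction and the closing zero-count are comparatively mechanical.
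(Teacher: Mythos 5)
Your outline is essentially the paper's own proof: the Plancherel reduction is Lemma~\ref{prop:avg-to-poly}, the symmetrization is Lemma~\ref{lemma:symmetric-kernels} (you run it on the Fourier side, the paper on the physical side), the weight-clearing $Q=(1-x)P$ and the equioscillation zero-count reproduce Theorem~\ref{thm:min-poly}, and your ``enlarged interval'' competitor is literally the paper's extremizer: your affine map sends $1\mapsto\cos\phi$ and $-1\mapsto-1$, which is exactly $L(x)=\tfrac{1+\cos\phi}{2}(x+1)-1$ from (\ref{eq:Sn}), and $2\mu^{-1}=\tfrac{4}{n+1}\cdot\tfrac{\sin\phi}{1+\cos\phi}$ matches the claimed constant. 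The inequality and its sharpness are correctly established by this plan.

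There is, however, one step that fails as stated: the claim that symmetrization \emph{strictly} decreases $C_2(u)$ unless $u$ is already even. Since $|\operatorname{Re}\hat u(\theta)|\le|\hat u(\theta)|$ is an equality wherever $\operatorname{Im}\hat u(\theta)=0$, a nonzero odd part need not raise the maximum. Concretely, for $n\ge 2$ take an even kernel whose weighted maximum is attained at a single pair $\pm\theta_0$ with $\theta_0\in(0,\pi)$, and add a small odd part with $\operatorname{Im}\hat u(\theta)=c\,\sin\theta\,(\cos\theta-\cos\theta_0)$: for $|c|$ small the maximum is unchanged, yet $u$ is not even. So pointwise strictness cannot deliver the ``only if'' direction; the uniqueness must instead come from the structure of the extremizer. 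The repair uses the alternation set you already identified: if $C_2(u)$ is optimal, then so is its symmetrization, which by the even-kernel uniqueness equals the optimal symbol; that symbol's weighted modulus attains its maximum at $n$ \emph{interior} frequencies $\theta_1,\dots,\theta_n\in(0,\pi)$ (plus $\theta=\pi$), and optimality of $u$ forces $\operatorname{Im}\hat u(\theta_j)=0$ at each of them. Writing $\operatorname{Im}\hat u(\theta)=\sin\theta\,R(\cos\theta)$ with $\deg R\le n-1$, vanishing at $n$ distinct interior points forces $R\equiv0$, i.e.\ $u$ even; note the count is exactly tight, so the argument genuinely needs all $n$ interior alternation points. (The paper's Lemma~\ref{lemma:symmetric-kernels} likewise transfers only the inequality, so this point deserves care in any complete write-up.) A smaller instance of the same issue: your zero-count assumes $\|\tilde Q\|_{\infty}<\mu^{-1}$, which proves minimality but not uniqueness; for uniqueness you must run it with $\|\tilde Q\|_{\infty}=\mu^{-1}$, where sign changes can degenerate into tangencies at alternation points, and argue as the paper does that each tangency contributes a zero of multiplicity two.
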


A quick computation shows the optimal kernels $u_n: \{-n, \cdots, n\} \to \R$ satisfy, as $n \to \infty$, the following asymptotic equivalence; notice the asymptotic improvement from $4$ to $\pi$ when removing the nonnegative Fourier transform restriction:
\begin{equation}
	C_{2}(u_n) \sim \f{\pi}{(n+1)^2}.
	\label{eq:optimal-asymptotic}
\end{equation}

The optimal kernel $u_n$ for each $n$ is given as an integral expression by (\ref{eq:optimal-u}) in the following section, and Figure~\ref{fig:optimal-kernel} pictures the optimal kernels $u_{10}$ and $u_{1000}$. Figure~\ref{fig:chelan-smoothing} depicts a time series function $f: \Z \to \R$ encoding the water level of Lake Chelan over a two week period as well as the smoothed data $u_{10} \ast f$. Notice this smoothing reduces noise and clarifies long-term trends. 

\begin{figure}[!ht]
    \centering
    \begin{subfigure}{0.41\textwidth}
    	\includegraphics[width=\textwidth]{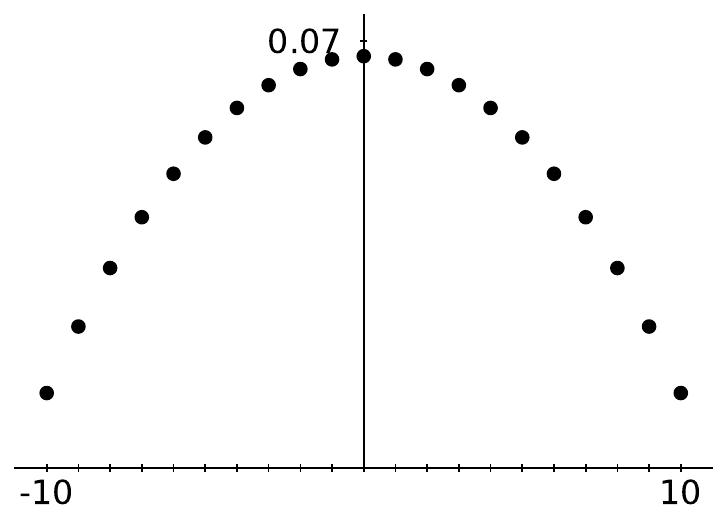}
    	\caption{$u_{10}(k)$}
    \end{subfigure}
    \qquad\tikz[baseline=-\baselineskip]\draw[white,ultra thick,->] (0,0) -- ++ (0.5,0);\qquad
    \begin{subfigure}{0.41\textwidth}
        \includegraphics[width=\textwidth]{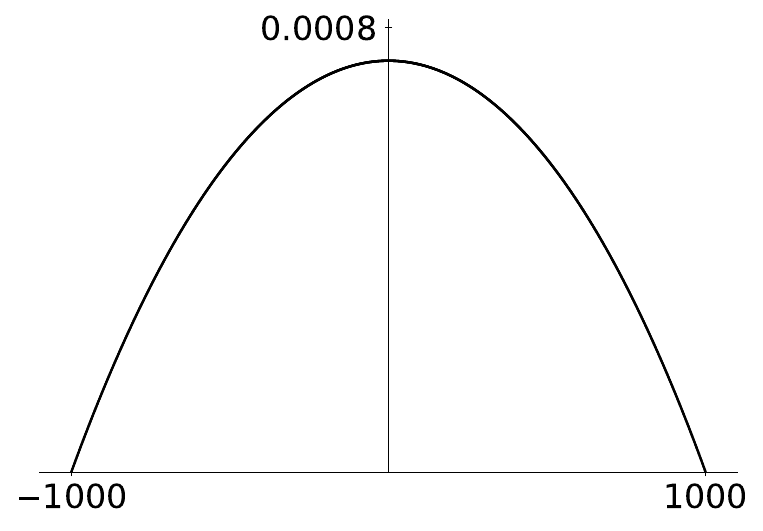}
         \caption{$u_{1000}(k)$}
	\end{subfigure}
	\caption{The optimal kernel $u_n(k)$}
	\label{fig:optimal-kernel}
\end{figure}
\begin{figure}[!ht]
    \centering
    \begin{subfigure}{0.41\textwidth}
    	\includegraphics[width=\textwidth]{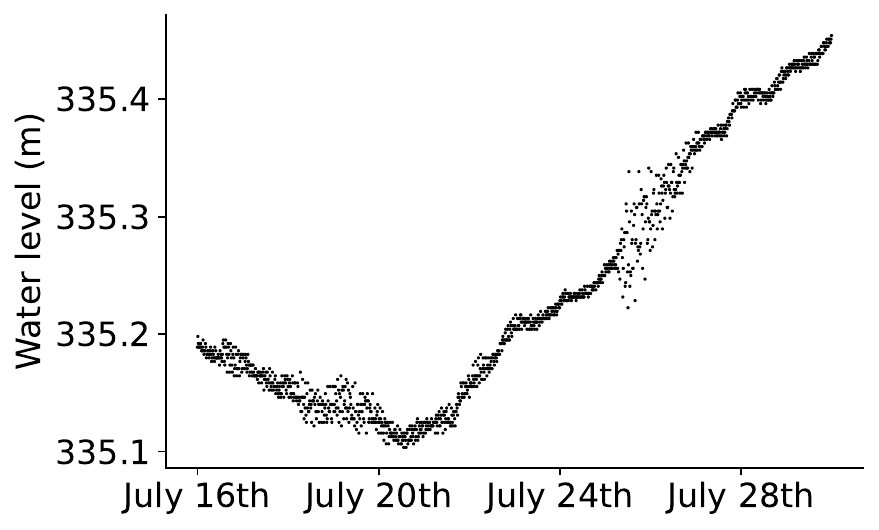}
    	\caption{Original data $f$}
    \end{subfigure}
\qquad\tikz[baseline=-\baselineskip]\draw[ultra thick,->] (0,0) -- ++ (0.5,0);\qquad
    \begin{subfigure}{0.41\textwidth}
        \includegraphics[width=\textwidth]{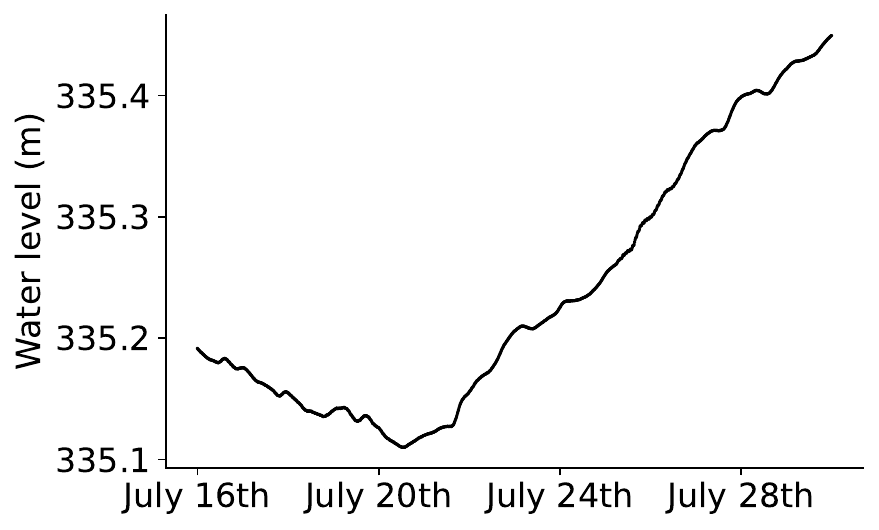}
         \caption{Smoothed data $u_{10} \ast f$}
	\end{subfigure}
	\caption{Water level of Lake Chelan over time \cite{chelan-data}}
	\label{fig:chelan-smoothing}
\end{figure}

As seen in Figure~\ref{fig:optimal-kernel}, the optimal kernel resembles a parabola, but it turns out the points do not quite lie on any parabola. However, choosing weights by sampling from a parabola results in the discrete Epanechnikov kernel $E_n$ for each $n$, which is a simple and effective approximation of the optimal kernel. Indeed, we show the parabolic Epanechnikov kernel has constant $C_2(E_n)$ within $2\%$ of the optimal constant $C_2(u_n)$ for large $n$, providing a new reason to use the Epanechnikov kernel.

\section{Results}

\subsection{A Sharp Fourier Inequality}

\label{sec:smooth-averaging}

We can rewrite the main result discussed in the Introduction as the following Fourier inequality. As usual, we only consider kernels $u: \{-n, -n+1, \cdots, n-1, n\} \to \R$ normalized so that $\sum_{\ell=-n}^n u(\ell) = 1$.
\begin{theorem}[Main Result, restated]
For any normalized $u:\left\{-n, \dots, n \right\} \rightarrow \mathbb{R}$,
\begin{equation}
	\sup_{0 \neq f \in \ell^2(\Z)} 
	\f{\ltwo{\Delta(u \ast f)}}{\ltwo{f}} \geq  \frac{4}{n+1} \cdot \f{\sin\l(\frac{\pi}{2n+2}\r)}{1+\cos\l(\frac{\pi}{2n+2}\r)}.
	\label{eq:main-inequality}
\end{equation}
	With equality if and only if $u(k)$ is as in (\ref{eq:optimal-u}).
	\label{thm:kernel-inequality}
\end{theorem}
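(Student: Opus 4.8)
The plan is to move everything to the Fourier side, reduce to symmetric kernels, and then solve an equivalent sharp extremal problem for algebraic polynomials on $[-1,1]$.

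First I would use the Fourier transform on $\ell^2(\mathbb{Z})$, $\hat f(\theta)=\sum_{k}f(k)e^{ik\theta}$, under which convolution becomes multiplication and the second difference acts as multiplication by a symbol of modulus $4\sin^2(\theta/2)=2(1-\cos\theta)$. By Plancherel the operator $f\mapsto \Delta(u\ast f)$ has norm equal to the $L^\infty$-norm of its multiplier, so that (as presumably shown in Section~\ref{sec:avg-to-poly})
\[ \sup_{0\neq f}\frac{\|\Delta(u\ast f)\|}{\|f\|}=\sup_{\theta\in[-\pi,\pi]}2(1-\cos\theta)\,|\hat u(\theta)|, \]
where the normalization $\sum_\ell u(\ell)=1$ reads $\hat u(0)=1$.

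Next I would reduce to symmetric kernels. Replacing $u$ by its even part $v(\ell)=\tfrac12(u(\ell)+u(-\ell))$ replaces $\hat u$ by $\operatorname{Re}\hat u$, which can only shrink $|\hat u(\theta)|$ pointwise while preserving the normalization; hence the supremum for $v$ is at most that for $u$, and it suffices to prove (\ref{eq:main-inequality}) for even $u$. For even $u$ we have $\hat u(\theta)=P(\cos\theta)$ for a real polynomial $P$ of degree $\le n$ with $P(1)=1$. Setting $Q(x)=(1-x)P(x)$, a polynomial of degree $\le n+1$ with $Q(1)=0$ and $Q'(1)=-1$, the right-hand side above becomes exactly $2\,\|Q\|_{L^\infty[-1,1]}$. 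Writing $N=n+1$ and using $\tfrac{\sin\alpha}{1+\cos\alpha}=\tan(\alpha/2)$, Theorem~\ref{thm:kernel-inequality} is therefore equivalent to the sharp polynomial estimate
\[ \min\Big\{\,\|Q\|_{L^\infty[-1,1]} : \deg Q \le N,\ Q(1)=0,\ Q'(1)=-1\Big\}=\frac{2}{N}\tan\!\Big(\frac{\pi}{4N}\Big). \]

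The heart of the argument, and the step I expect to be hardest, is this extremal problem for polynomials; I would attack it by the Chebyshev alternation method. The natural guess is that the minimizer $Q^{*}$ equioscillates between $\pm L$ (with $L=\tfrac{2}{N}\tan(\tfrac{\pi}{4N})$) at exactly $N$ nodes of $[-1,1)$ — one of which is the endpoint $x=-1$, since $\theta=\pi$ is automatically critical for $Q(\cos\theta)$ — while having a double zero at $x=1$. Producing $Q^{*}$ in closed form and evaluating its norm is the crux: I would determine the nodes from the double-zero structure of the nonnegative trigonometric polynomial $L^2-Q^{*}(\cos\theta)^2$, or by a direct trigonometric construction, and then read off $L$. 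Optimality is then routine: for any competitor $\tilde Q$ with the same endpoint data, $D=Q^{*}-\tilde Q$ has a double zero at $x=1$ and, if $\|\tilde Q\|<L$, at least $N-1$ sign changes across the alternation nodes, forcing at least $N+1$ zeros on a polynomial of degree $\le N$ and hence $D\equiv 0$; this yields both the bound and uniqueness of the extremal symmetric kernel.

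Finally I would recover the kernel and settle equality. Inverting the Fourier transform of the optimal symbol $\hat u_n(\theta)=Q^{*}(\cos\theta)/(1-\cos\theta)$ produces $u_n$ as the integral in (\ref{eq:optimal-u}). For equality in (\ref{eq:main-inequality}) for a general $u$, note the optimal symbol attains the bound at the $N-1$ interior alternation nodes; equality forces the odd part of $\hat u$ — a sine polynomial of degree $\le n=N-1$ — to vanish at these $N-1$ interior points, which exceeds the number of interior zeros such a polynomial can have, so it vanishes identically and $u=u_n$.
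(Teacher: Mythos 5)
Your scaffolding is essentially the paper's own route: Plancherel turns the left side of (\ref{eq:main-inequality}) into the sup of the multiplier $2(1-\cos\theta)\,|\hat u(\theta)|$, evenness gives $\hat u(\theta)=P(\cos\theta)$ with $\deg P\le n$ and $P(1)=1$, and the substitution $x=\cos\theta$ yields the extremal problem for $(1-x)P(x)$; this is exactly Lemma~\ref{prop:avg-to-poly} combined with Theorem~\ref{thm:min-poly}. Two of your steps are in fact cleaner than the paper's. Your Fourier-side symmetrization (replacing $\hat u$ by $\operatorname{Re}\hat u$ shrinks the multiplier pointwise and preserves $\hat u(0)=1$) is more direct than Lemma~\ref{lemma:symmetric-kernels}, which argues on the physical side with the triangle inequality and reflected inputs $f^-$. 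And your equality analysis for non-symmetric $u$ --- equality forces $\operatorname{Im}\hat u(\theta)=\sin\theta\,h(\cos\theta)$, with $\deg h\le n-1$, to vanish at the $n$ interior alternation nodes, one more zero than such a function can have in $(0,\pi)$ --- settles the ``only if'' clause for general kernels, a point the paper's proof of Theorem~\ref{thm:kernel-inequality} leaves implicit (its symmetrization lemma transfers only the inequality, not the uniqueness).

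The genuine gap is exactly the step you defer as ``the crux'': you never construct $Q^*$, and without it the argument establishes neither the lower bound nor sharpness. The alternation/zero-counting step is conditional --- it shows that \emph{if} a degree-$N$ polynomial with $Q^*(1)=0$, $(Q^*)'(1)=-1$, sup norm $L=\frac{2}{N}\tan\left(\frac{\pi}{4N}\right)$, and $N$ alternation points in $[-1,1)$ exists, \emph{then} nothing beats it; all the content of the theorem (in particular the specific constant) lives in exhibiting $Q^*$. Moreover, your guessed structure is internally inconsistent: $Q^*$ cannot have a double zero at $x=1$, since the constraint forces $(Q^*)'(1)=-1$; the extremal polynomial has a \emph{simple} zero there (the double zero belongs to the difference $D=Q^*-\tilde Q$, which you do use correctly in the optimality step). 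The missing construction, which is the heart of the paper's Theorem~\ref{thm:min-poly}, is a stretched Chebyshev polynomial: $Q^*(x)=-\alpha\, T_N(\lambda(x))$, where $\lambda(x)=\frac{1+\cos(\pi/(2N))}{2}(x+1)-1$ is the affine map fixing $-1$ and sending $1$ to $\cos\left(\frac{\pi}{2N}\right)$, the largest root of $T_N$, and $\alpha$ is tuned so that $(Q^*)'(1)=-1$; the identity $T_N'=NU_{N-1}$ gives $\alpha=\frac{2\sin(\pi/(2N))}{N\left(1+\cos(\pi/(2N))\right)}=L$. Since $\lambda([-1,1])\subset[-1,1]$ and $\lambda(-1)=-1$, the sup norm is $\alpha$, and the $N$ alternation points are the $\lambda$-preimages of the extrema of $T_N$ other than $+1$. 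With that polynomial in hand, your outline closes and recovers the kernel (\ref{eq:optimal-u}) by Chebyshev orthogonality, as in the paper.
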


Note the discrete Laplacian $(\Delta f)(k) \coloneqq f(k+2) - 2f(k+1) + f(k)$ is precisely the discrete second derivative $(D^2f)(k)$ defined in the Introduction. The optimal $u_n: \{-n, \cdots, n\} \to \R$ that yields equality in Theorem~\ref{thm:kernel-inequality} for any $n$ can be written explicitly by defining $u_n$ to be symmetric $u_n(k) = u_n(-k)$, then for $k \geq 0$ setting
\begin{equation}
		u_n(k) = \f{1}{\pi} \int_{-1}^1 S_n(x) T_k(x) \f{dx}{\sqrt{1-x^2}}
		\label{eq:optimal-u}
\end{equation}
where $T_k(x)$ is the $k$th Chebyshev polynomial (as defined in Section \ref{sec:polynomial}), and
\begin{equation}
S_{n-1}(x) = \f{1}{x-1} \cdot \f{2\sin\l(\f{\pi}{2n}\r)}{n\l(1+\cos\l(\f{\pi}{2n}\r)\r)} \cdot T_{n}\l(\f{1+\cos\l(\f{\pi}{2n}\r)}{2}(x+1)-1\r).
\label{eq:Sn}
\end{equation}

This result extends the work of Kravitz and Steinerberger \cite{smoothest-avg} and adds to the recent research activity on sharp Fourier inequalities. For example, there is current research on sharp Fourier restriction and extension inequalities \cite{carneiro2019extremizers, carneiro2017sharp, oliveira2022band, stovalluniform, stovall2020extremizability}, on sharp Strichartz inequalities \cite{foschi2007maximizers, hundertmark2006sharp}, on sharp Hausdorff-Young inequalities \cite{kovavc2019sharp, kovavc2022asymptotically}, and other Fourier inequalities \cite{inequalities-in-fa, beckner1995pitt, bandwith-vs-time, hermite-polynomials}

\subsection{The Epanechnikov Kernel}
The optimal kernel $u_n$ has a complicated expression as given in (\ref{eq:optimal-u}). However, as seen in Figure~\ref{fig:optimal-kernel}, this optimal $u_n$ resembles a parabola, and conversely we find that choosing weights by sampling from a parabola does extraordinarily well in smoothing the Laplacian of a given function. This choice of weights yields the discrete normalized \textit{Epanechnikov kernel} $E_n: \{-n, \cdots, n\} \to \R$ defined by
\begin{equation}
	E_n(k) = \f{3}{n (4 n^2 - 1)}\l(n^2-k^2\r).
	\label{eq:epanechnikov}
\end{equation}

The Epanechnikov kernel is widely used \cite{hall2004bump, samiuddin1990nonparametric, yamasaki2019kernel} in both theory and applications. This popularity stems from its computational efficiency and from Epanechnikov's proof \cite{epanechnikov} that it is the optimal kernel to use in kernel density estimation (KDE) in terms of minimizing expected mean integrated square error. The following theorem reveals the Epanechnikov kernel is less than 2\% worse than optimal in smoothing the Laplacian, providing another reason to use the Epanechnikov kernel in practice.

\begin{theorem}
Let $\mu$ be as in (\ref{eq:mu-def}). Then as $n \to \infty$ we get the asymptotic equivalence
\begin{equation*}
	\sup_{0 \neq f \in \ell^2(\Z)} \f{\ltwo{\Delta(E_n \ast f)}}{\ltwo{f}}
	\sim \f{3\mu}{\pi} \cdot \f{\pi}{n^2}.
\end{equation*}
\label{thm:epanechnikov}
\end{theorem}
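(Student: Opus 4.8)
The plan is to pass to the Fourier side, where the map $f \mapsto \Delta(u \ast f)$ becomes a Fourier multiplier. As developed in Section~\ref{sec:avg-to-poly}, for a normalized symmetric kernel $u$ one has
\[
\sup_{0 \neq f \in \ell^2(\Z)} \frac{\ltwo{\Delta(u \ast f)}}{\ltwo{f}} = \sup_{\theta \in (0,\pi]} 4\sin^2\!\left(\tfrac{\theta}{2}\right)\left|\widehat{u}(\theta)\right|,
\]
since $\Delta$ has Fourier symbol $(e^{i\theta}-1)^2$ of modulus $4\sin^2(\theta/2)$ and convolution becomes multiplication. Everything therefore reduces to understanding the single-variable multiplier $m_n(\theta) \coloneqq 4\sin^2(\theta/2)\,\widehat{E_n}(\theta)$ and locating its maximum modulus.

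First I would obtain a clean closed form for $\widehat{E_n}$. The key observation is that the parabolic weights $b_k = n^2 - k^2$ have constant discrete second difference: $b_{k+1} - 2b_k + b_{k-1} = -2$ for $|k| \le n-1$, with boundary spikes of size $2n-1$ at $k = \pm n$ and zero elsewhere. Taking Fourier transforms of this identity (the left side picks up the factor $-4\sin^2(\theta/2)$) and solving, then using $\sin((n-\tfrac12)\theta) = \sin(n\theta)\cos(\tfrac\theta2) - \cos(n\theta)\sin(\tfrac\theta2)$, yields
\[
m_n(\theta) = \frac{6}{n(4n^2-1)}\left[\sin(n\theta)\cot\!\left(\tfrac{\theta}{2}\right) - 2n\cos(n\theta)\right].
\]

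With this formula in hand, the asymptotics emerge from a scaling analysis. Substituting $\theta = \xi/n$ and letting $n \to \infty$, the bracket behaves like $2n\,\phi(\xi)$ where
\[
\phi(\xi) = \frac{\sin\xi}{\xi} - \cos\xi = \frac{\sin\xi - \xi\cos\xi}{\xi},
\]
so that $n^2 m_n(\xi/n) \to 3\,\phi(\xi)$ pointwise. I expect the constant $\mu$ of (\ref{eq:mu-def}) to be exactly $\mu = \sup_{\xi} |\phi(\xi)|$, attained near $\xi^\ast \approx 2.75$ with $\mu \approx 1.063$; granting this, the target is $3\mu/n^2 = (3\mu/\pi)(\pi/n^2)$. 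The lower bound is then immediate: testing $\theta = \xi^\ast/n$ gives $n^2 m_n(\xi^\ast/n) \to 3\mu$, hence $\sup_\theta |m_n(\theta)| \ge \frac{3\mu}{n^2}(1-o(1))$.

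The main work, and the main obstacle, is the matching uniform upper bound, i.e.\ showing that no frequency beats the scaling peak. I would split $(0,\pi]$ into a bulk region $[\delta,\pi]$ and a low-frequency window $(0,\delta]$. On $[\delta,\pi]$ the term $\cot(\theta/2)$ is $O(1)$, so the closed form gives $|m_n(\theta)| \le \frac{3}{n^2}(1+o(1))$, which is strictly below $3\mu/n^2$ precisely because $\mu > 1$. On $(0,\delta]$ I set $\xi = n\theta$ and use the elementary bound $|\phi(\xi)| \le 1 + 1/\xi$: for $\xi$ beyond a large fixed $M$ chosen with $1 + 1/M < \mu$, the resulting contribution again stays below $3\mu/n^2$, while on the remaining compact window $\xi \in [0,M]$ the convergence $n^2 m_n(\xi/n) \to 3\phi(\xi)$ is uniform, so the supremum there tends to $3\mu$. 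The delicate point is making the error in this uniform convergence genuinely uniform in $\xi$ near the singularity of $\cot(\theta/2)$ at $\theta = 0$; one must control $\cot(\theta/2) - 2/\theta$ and $\cos(\theta/2) - 1$ with $\xi$-uniform remainders over the whole window. Combining the three estimates gives $\sup_\theta |m_n(\theta)| = \frac{3\mu}{n^2}(1 + o(1))$, which is exactly the asserted equivalence.
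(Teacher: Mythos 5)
Your proposal is correct and, after the change of variables $x=\cos\theta$, it is essentially the paper's own proof: your closed form for $m_n(\theta)$ is exactly identity (\ref{eq:equivalent-polynomial-form}) of Lemma~\ref{lemma:cheby-equivalence}; your scaling $\theta=\xi/n$ with limit $\frac{\sin\xi}{\xi}-\cos\xi$, your threshold $M$ chosen so that $1+1/M<\mu$ (the paper hard-wires $M=16$ into the definition (\ref{eq:mu-def})), and your elementary $\cot(\theta/2)\le 2/\theta$ bound correspond precisely to the paper's split of $[-1,1]$ at $\cos(16/n)$ together with Lemma~\ref{lemma:beat-frequency-inequality}; and the ``delicate'' uniform convergence on the compact window that you flag as the remaining work is exactly the content of Lemma~\ref{lemma:uniform-convergence}. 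The only genuine departure is your derivation of the closed form, exploiting that the parabolic weights $n^2-k^2$ have constant discrete second difference with boundary spikes and dividing by the symbol of $\Delta$ (a Dirichlet-kernel computation), which is a cleaner route to the same identity than the paper's double induction in Lemma~\ref{lemma:cheby-equivalence}, but it does not change the architecture of the argument.
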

The constant $3\mu/\pi$ in the above theorem is a universal constant defined by
\begin{equation}
	 \f{3\mu}{\pi} \coloneqq \f{3}{\pi} \max_{\alpha \in [0,16]}\l|\f{\sin\alpha}{\alpha} - \cos{\alpha}\r| \approx  1.015.
	 \label{eq:mu-def}
\end{equation}

Comparing Theorem~\ref{thm:kernel-inequality}, whose asymptotics are given by (\ref{eq:optimal-asymptotic}), and the statement of Theorem~\ref{thm:epanechnikov} reveals the asymptotics of $C_2(u_n)$ and $C_2(E_n)$ differ only by a factor of $3 \mu / \pi \approx 1.015$ as $n \to \infty$. Hence the Epanechnikov kernel performs less than $2\%$ worse than optimal asymptotically. 

\subsection{A Sharp Polynomial Inequality}
After taking a Fourier transform (see Section~\ref{sec:avg-to-poly}), Theorem~\ref{thm:kernel-inequality} reduces to the following claim about polynomials, which is interesting in its own right.
\begin{theorem}
	Let $p(x)$ be a polynomial of degree at most $n-1$ with $p(1) = 1$. Then
$$\max_{x \in [-1,1]}|(1-x)p(x)| \geq \f{2\sin\l(\f{\pi}{2n}\r)}{n\l(1+\cos\l(\f{\pi}{2n}\r)\r)}$$
with equality if and only if $p(x) = S_{n-1}(x)$ where $S_{n-1}(x)$ is given in (\ref{eq:Sn}).
\label{thm:min-poly}
\end{theorem}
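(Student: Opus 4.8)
The plan is to recast this as a classical constrained Chebyshev extremal problem and then run an equioscillation/root-counting argument. First I would substitute $q(x) = (1-x)p(x)$. Since $\deg p \le n-1$, the polynomial $q$ has degree at most $n$ and automatically satisfies $q(1)=0$; conversely any degree-$\le n$ polynomial with $q(1)=0$ factors as $(1-x)p(x)$ with $\deg p \le n-1$, and a short computation gives $q'(1) = -p(1)$, so the normalization $p(1)=1$ is equivalent to $q'(1)=-1$. Since $\max_{x\in[-1,1]}|(1-x)p(x)| = \max_{x\in[-1,1]}|q(x)|$, the theorem becomes: among all polynomials $q$ of degree at most $n$ with $q(1)=0$ and $q'(1)=-1$, the minimum of $\max_{x\in[-1,1]}|q(x)|$ equals $M := \frac{2\sin(\pi/2n)}{n(1+\cos(\pi/2n))}$, uniquely attained by $q^*(x) := (1-x)S_{n-1}(x)$.

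Next I would analyze the candidate. Writing $c = \cos(\pi/2n)$ and $\phi(x) = \frac{1+c}{2}(x+1)-1$, the formula (\ref{eq:Sn}) shows $q^*(x) = -M\, T_n(\phi(x))$, where $\phi$ is the increasing affine bijection from $[-1,1]$ onto $[-1,c]$. Because $\phi(1)=c$ and $T_n(\cos(\pi/2n)) = \cos(\pi/2) = 0$, we get $q^*(1)=0$; differentiating and using $T_n'(\cos\alpha) = n\sin(n\alpha)/\sin\alpha$ at $\alpha = \pi/2n$ confirms $q^{*\prime}(1) = -1$, which is exactly the identity that pins down the constant $M$. On $[-1,c]$ the Chebyshev polynomial $T_n$ attains $\pm 1$ precisely at the $n$ points where $\phi(x) = \cos(k\pi/n)$, $k=1,\dots,n$, with alternating signs, and has modulus strictly below $1$ on $(\cos(\pi/n), c]$. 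Hence $\max_{x\in[-1,1]}|q^*| = M$, attained exactly at $n$ points $x_1 > \cdots > x_n = -1$ at which $q^*$ equioscillates between $+M$ and $-M$.

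For the lower bound I would argue by contradiction via root counting. Suppose some admissible $q$ satisfies $\max_{x\in[-1,1]}|q| < M$ and set $d = q^* - q$, a polynomial of degree at most $n$. The shared constraints give $d(1) = 0$ and $d'(1) = 0$, so $x=1$ is a root of multiplicity at least $2$. At each equioscillation point $x_j$ we have $|q(x_j)| < M = |q^*(x_j)|$, so $d(x_j)$ inherits the sign of $q^*(x_j)$; since these signs alternate across $x_1, \dots, x_n$, the difference $d$ has at least $n-1$ sign changes and thus at least $n-1$ roots in $(-1, x_1)$. Together with the double root at $x=1 > x_1$ this gives at least $n+1$ roots counted with multiplicity, contradicting $\deg d \le n$ unless $d \equiv 0$; the latter contradicts strictness. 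This proves the bound.

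Finally, for uniqueness I would refine the same count in the equality case: if $\max|q| = M$ with $q \ne q^*$, then at any interior equioscillation point $x_j$ where $d(x_j) = 0$, the point is simultaneously a local extremum of $q^*$ and of $q$, so $d'(x_j)=0$ and the root is double; bookkeeping these double roots against the lost sign changes, plus a possible simple root at the endpoint $x_n=-1$ and the double root at $x=1$, again yields at least $n+1$ roots and forces $d \equiv 0$. I expect the genuine obstacles to be twofold: first, the explicit verification that (\ref{eq:Sn}) really equals $-M\,T_n(\phi(x))$ together with the precise endpoint behavior of $T_n$ on $[-1,c]$, which is a computation with the shifted Chebyshev polynomial and its derivative; and second, the careful multiplicity accounting in the uniqueness step, since touching points in the interior must be counted as double roots, the endpoint $x=-1$ behaves differently, and one must confirm no roots are double-counted against the forced root at $x = 1$.
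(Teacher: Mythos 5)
Your proposal is correct and takes essentially the same route as the paper's proof: the same shifted-Chebyshev candidate $-M\,T_n(\phi(x))$, the same $n$-point equioscillation on $[-1,1]$, and the same root count of the difference with interior touching points counted as double roots. Your double root at $x=1$ (from $d(1)=d'(1)=0$) plays exactly the role of the paper's step of dividing $z(x)=q_n(x)-(1-x)p(x)$ by $(1-x)$ and then evaluating the quotient $S_{n-1}-p$ at $x=1$, so the two arguments differ only in bookkeeping.
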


\begin{figure}[!ht]
    \centering
    \begin{subfigure}{0.41\textwidth}
    	\includegraphics[width=\textwidth]{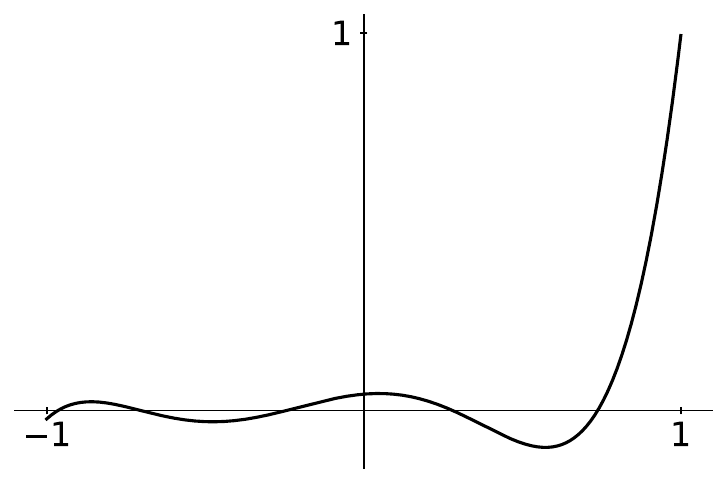}
    	\caption*{Optimal polynomial $S_5(x)$}
    \end{subfigure}
    \qquad\tikz[baseline=-\baselineskip]\draw[white,ultra thick,->] (0,0) -- ++ (0.5,0);\qquad
    \begin{subfigure}{0.41\textwidth}
        \includegraphics[width=\textwidth]{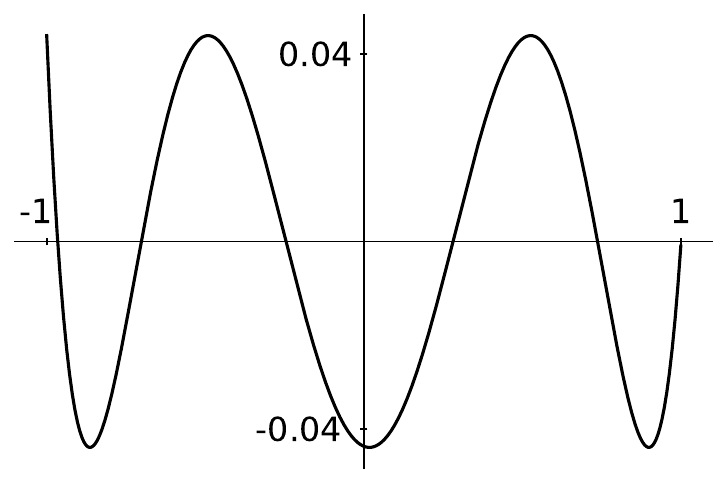}
         \caption*{Polynomial $(1-x)S_5(x)$}
	\end{subfigure}
	\caption{Optimal Polynomial and Product}
\end{figure}

\section{Proofs}
\subsection{A Sharp Polynomial Inequality}
\label{sec:polynomial}
This section proves Theorem~\ref{thm:min-poly}, which is the key to the proof of Theorem~\ref{thm:kernel-inequality} and the formula for the optimal kernel given in (\ref{eq:optimal-u}). This proof makes heavy use Chebyshev polynomials, so we first recall some facts about Chebyshev polynomials for the convenience of the reader.

\textit{Chebyshev polynomials} are a family $\{T_n(x)\}$ of polynomials defined by declaring $T_0(x) = 1$ and $T_1(x) = x$, then defining the rest with the recurrence 
\begin{equation*}
T_{n+1}(x) = 2xT_{n}(x)-T_{n-1}(x).
\end{equation*}
A quick induction argument reveals Chebyshev polynomials also satisfy
\begin{equation}
T_{n}(\cos\theta) = \cos(n\theta).
\label{eq:cosines}
\end{equation}
The above relation is responsible for many nice properties of Chebyshev polynomials and so we typically consider Chebyshev polynomials $T_n(x)$ over the domain $[-1,1]$ where this formula can apply. This relation also reveals the zeroes of the Chebyshev polynomial $T_n(x)$ are located at $\cos\l(\f{\pi}{n}(k+\f{1}{2})\r)$ for $k \in \Z$. 
By taking the derivative of both sides of (\ref{eq:cosines}), we find that the derivative $T_n'(x)$ of Chebyshev polynomials over $[-1,1]$ can be written $T_n'(x) = nU_{n-1}(x)$ for some polynomial $U_{n-1}(x)$ that satisfies $U_{n-1}(\cos(\theta))\sin\theta = \sin(n\theta)$. These polynomials $\{U_{n}(x)\}$ are called \textit{Chebyshev polynomials of the second kind}. Finally, due to (\ref{eq:cosines}), each Chebyshev polynomial $T_n(x)$ satisfies the \emph{equioscillation property}, meaning there exists $n+1$ extrema $1 = x_{0} > x_1 > \cdots > x_{n} = -1$ so that $T_n(x_i) = (-1)^{i}$.

We are now equipped to prove Theorem~\ref{thm:min-poly}. The equation defining the optimal polynomial $S_{n-1}(x)$ is long, but the idea behind its construction is a simple modification of Chebyshev polynomials. To construct a degree $n$ function $(1-x)p(x)$ that stays minimal over $[-1,1]$, we slightly stretch the Chebyshev polynomial $T_n(x)$ to the function $q_n(x)$ so that $q_n(1) = 0$ and $q_n'(1) = -1$. Then defining $S_n(x) = q_n(x)/(1-x)$ provides the minimal degree $n-1$ polynomial that satisfies the necessary conditions.

\begin{proof}[Proof of Theorem~\ref{thm:min-poly}]
	We start by verifying the claimed optimal polynomial $S_{n-1}(x)$ fulfills the restrictions and satisfies the claimed inequality for any positive integer $n$. We denote $(1-x)S_{n-1}(x)$ by $q_n(x)$ and observe $q_n(x) = -\alpha T_n(L(x))$ is a scaled and stretched Chebyshev polynomial for carefully chosen constant $\alpha$ and linear change of variables $L(x)$:
	\begin{align*}
		\alpha = \f{2\sin\l(\f{\pi}{2n}\r)}{n\l(1+\cos\l(\f{\pi}{2n}\r)\r)} \quad \text{and} \quad
		L(x) = \f{1+\cos\l(\f{\pi}{2n}\r)}{2}(x+1)-1.
	\end{align*}
	Writing $q_n(x) = -\alpha T_n(L(x))$ immediately reveals $q_n(x)$ is a polynomial of order $n$. The change of variables $L(x)$ is designed so we have
	$$q_n(1) = -\alpha T_n(L(1)) = -\alpha T_n\l(\cos\l(\f{\pi}{2n}\r)\r) = 0,$$
	using that $\cos\l(\f{\pi}{2n}\r)$ is a zero of the Chebyshev polynomial $T_{n}(x)$. Thus we have $S_{n-1}(x) = q_n(x)/(1-x)$ is a well-defined polynomial of order $n-1$ as claimed. Next we show $S_{n-1}(1) = 1$. First observe $S_{n-1}(1) = -q_n'(1)$ and now compute
	\begin{align*}
		S_{n-1}(1) 
		&= -q_n'(1)
		= -\l.\der{}{x}\r|_{x=1}(-\alpha T_n(L(x)))\\
		&= \alpha L'(1) T_n'(L(1))
		= \alpha \f{n}{2} \l(1+\cos\l(\f{\pi}{2n}\r)\r) U_{n-1}\l(\cos\l(\f{\pi}{2n}\r)\r)
	\end{align*}
	where we used $T_n'(x) = nU_{n-1}(x)$ for $U_{n-1}(x)$ the Chebyshev polynomial of the second kind with order $n-1$. Recall $U_{n-1}(\cos\theta) = \sin(n\theta)/\sin(\theta)$ and therefore, continuing our computation, we see the constant $\alpha$ is chosen so that we get
	\begin{align*}
		S_{n-1}(1)
		= \alpha \cdot \f{n}{2} \cdot \f{1+\cos\l(\f{\pi}{2n}\r)}{\sin\l(\f{\pi}{2n}\r)}
		= \alpha \cdot \alpha^{-1} 
		= 1.
	\end{align*}
	Therefore $S_{n-1}(x)$ fulfills the necessary conditions. To verify $S_{n-1}(x)$ satisfies the equality, use $L([-1,1]) \subset [-1,1]$ and that $|T_n(y)| \leq 1$ on $[-1,1]$ to see
\begin{align*}
	\max_{x \in [-1,1]}|(1-x)S_{n-1}(x)| 
	&= \max_{x \in [-1,1]}|q_n(x)|\\
	&= \max_{x \in [-1,1]}|\alpha T_n(L(x))|
	\leq \alpha \max_{y \in [-1,1]}|T_n(y)|
	\leq \alpha.
\end{align*}
	This is indeed an equality because
\begin{align*}
	\max_{x \in [-1,1]}|(1-x)S_{n-1}(x)| \geq  |\alpha T_n(L(-1))| = \alpha\cdot|T_n(-1)| = \alpha.
\end{align*}

	We now show $S_{n-1}(x)$ is the unique polynomial of degree at most $n-1$ that achieves this equality by deriving an equioscillation property for $q_n(x)$ and modifying the standard argument for the minimizing property of Chebyshev polynomials. Recall the Chebyshev polynomial $T_n(x)$ has $n+1$ extrema $1 = x_{0} > x_1 > \cdots > x_{n} = -1$ so that the equioscillation property $T_n(x_i) = (-1)^{i}$ is satisfied. To see $q_n(x)$ has a similar equioscillation property, define inputs $y_i = L^{-1}(x_i)$ for $0 \leq i \leq n$, and observe $y_0 \geq \cdots \geq y_{n}$ by $L^{-1}(x)$ strictly increasing. Next, note the second extrema of Chebyshev polynomials is given by $x_1 = \cos\l(\f{\pi}{n}\r)$ and so $x_1 < \cos\l(\f{\pi}{2n}\r)$; thus by $L^{-1}(x)$ strictly increasing, $y_1 = L^{-1}(x_1) < L^{-1}(\cos\l(\f{\pi}{2n}\r)) = 1$. Now compute $L(-1) = -1$, which implies $L^{-1}(-1) = -1$. Therefore we find the inputs $y_i$ satisfy $1 > y_1 \geq \cdots \geq y_n = -1$, and $q_n(y_i) = -\alpha T_n(L(y_i)) = -\alpha T_n(x_i) = -\alpha (-1)^{i}$. That is, $q_n(x)$ has $n$ maxima satisfying the equioscillation property in $[-1,1]$. Next suppose $p(x)$ is any polynomial of degree $n-1$ so that $p(1) = 0$ and
	$$\max_{x \in [-1,1]}|(1-x)p(x)| \leq \alpha.$$
	We argue $p(x)=S_{n-1}(x)$ by using our equioscillation property to show the polynomials must intersect sufficiently many times. Formally, we count the zeros of the polynomial $z(x) = q_n(x)-(1-x)p(x)$. By our equioscillation property $q_n(y_i) = -\alpha(-1)^{i}$ and the assumption $|(1-x)p(x)| \leq \alpha$ over $[-1,1]$, we require $z(y_i) \geq 0$ for $i$ odd and $z(y_i) \leq 0$ for $i$ even. Therefore $z(x)$ must have a zero in each interval $[y_i, y_{i+1}]$ by the intermediate value theorem. That is, the $n-1$ intervals $[y_1, y_2], \cdots, [y_{n-1},y_{n}]$ all contain a zero. Furthermore if any two intervals $[y_{i-1},y_i]$ and $[y_i,y_{i+1}]$ share a zero at $y_i$, then we can show $z(x)$ will have a zero of multiplicity at least two at $y_i$ and therefore $z(x)$ still has at least $n-1$ zeros counted with multiplicity on $[y_0,y_{n-1}]$ as follows. By $q_n(y_i) = \pm \alpha$ at $y_i$, we know $q_n(x)$ has a minimum or maximum at $y_i$, implying $q_n'(y_i) = 0$. Similarly, if $z(y_i) = 0$, then $(1-y_i)p(y_i) = \pm \alpha$ and so we also find that $(1-x)p(x)$ has a minimum or maximum at $y_i$, so $\derev{}{x}{y_i}(1-x)p(x) = 0$. Therefore $z'(y_i) = q_n'(y_i) - \derev{}{x}{y_i}(1-x)p(x) = 0$ and so $z(x)$ indeed has a zero of multiplicity at least two at $y_i$ and so $z$ has $n-1$ zeros on $[-1,1)$.
	
	\begin{figure}[!ht]
    \centering
    \begin{subfigure}{0.41\textwidth}
    	\includegraphics[width=\textwidth]{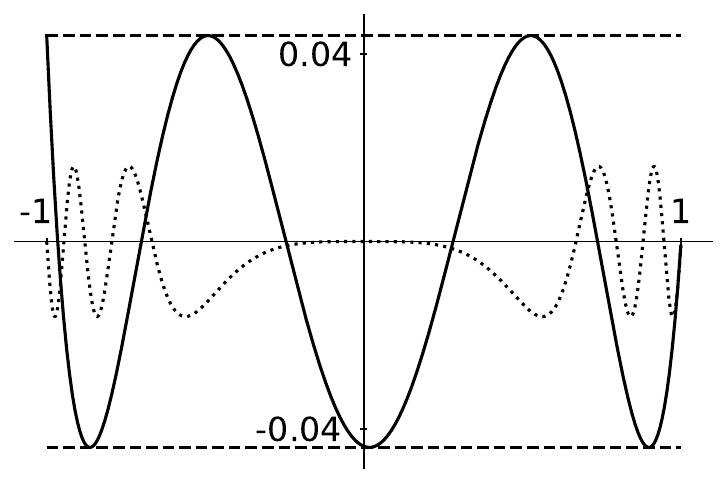}
    	\caption*{$q_n(x)$ and $(1-x)p(x)$ must intersect}
    \end{subfigure}
    \qquad\tikz[baseline=-\baselineskip]\draw[white,ultra thick,->] (0,0) -- ++ (0.5,0);\qquad
    \begin{subfigure}{0.41\textwidth}
        \includegraphics[width=\textwidth]{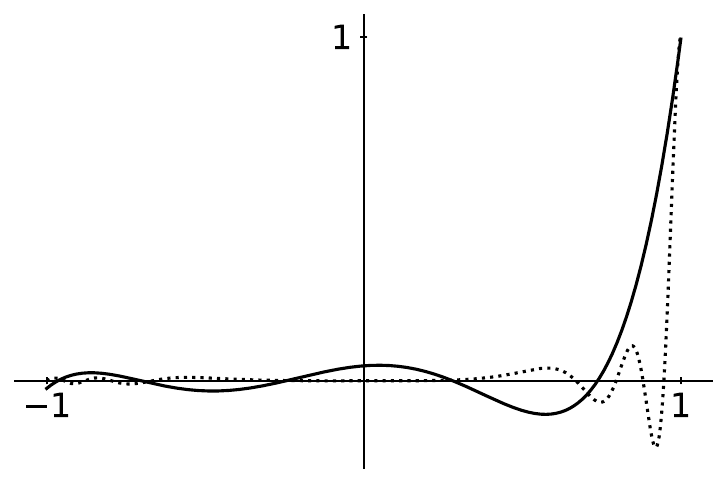}
         \caption*{$S_{n-1}(x)$ and $p(x)$ must intersect}
	\end{subfigure}
\end{figure}
	
	Now note $z(1) = q_n(1)-(1-1)p(1) = 0-0 = 0$ and so $z(x)/(1-x)$ is a polynomial of degree $n-1$ with the same roots on $[-1,1)$. Additionally observe
	$$\f{z(x)}{1-x} = \f{q_n(x)-(1-x)p(x)}{1-x} = S_{n-1}(x)-p(x)$$
	and therefore, evaluating $z(x)/(1-x)$ at $1$ reduces to $S(1)-p(1) = 1-1 = 0$ and so $z(x)/(1-x)$ has $n$ roots. However because $z(x)/(1-x)$ is of degree $n-1$ this implies $z(x)/(1-x)$ is the zero polynomial. That is, $p(x) = S_{n-1}(x)$.
\end{proof}
\subsection{Reduction to symmetric kernels}
Our Fourier analysis argument for Theorem~\ref{thm:kernel-inequality} given in Section~\ref{sec:avg-to-poly} only holds for \textit{symmetric kernels}, satisfying $u(k) = u(-k)$. Luckily, the following lemma demonstrates that it is sufficient to only prove Theorem~\ref{thm:kernel-inequality} for the class of symmetric normalized kernels.

\begin{lemma}
	Suppose for all symmetric and normalized kernels $u: \{-n, \cdots, n\} \to \R$,
	\begin{equation}
		\sup_{f \neq 0}\f{\ltwo{\Delta(u \ast f)}}{\ltwo{f}} \geq \beta_n
		\label{eq:sym-ineq}
	\end{equation}
	for some $\beta_n > 0$. Then {\normalfont(\ref{eq:sym-ineq})} holds for all normalized kernels $u: \{-n, \cdots, n\} \to \R$.
\label{lemma:symmetric-kernels}
\end{lemma}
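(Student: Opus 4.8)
The plan is to prove the contrapositive-flavored reduction by a \emph{symmetrization} argument: given an arbitrary normalized kernel $u$, I will build a symmetric normalized kernel $\tilde u$ whose constant is no larger than that of $u$, and then feed $\tilde u$ into the hypothesis. Since the quantity in (\ref{eq:sym-ineq}) is exactly the constant $C_2(u)$ from the Introduction, it suffices to produce $\tilde u$ symmetric and normalized with $C_2(\tilde u) \le C_2(u)$; applying (\ref{eq:sym-ineq}) to $\tilde u$ then gives $C_2(u) \ge C_2(\tilde u) \ge \beta_n$.

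First I would record the structure of $C_2$ as an operator norm. Writing $T_u \colon f \mapsto \Delta(u \ast f)$ for the associated bounded linear operator on $\ell^2(\Z)$, the left-hand side of (\ref{eq:sym-ineq}) is precisely $C_2(u) = \|T_u\|$. Because the map $u \mapsto T_u$ is linear, $C_2$ inherits the triangle inequality and positive homogeneity from the operator norm. The key structural input I would then establish is that reflection preserves the constant: if $\check u(\ell) := u(-\ell)$ denotes the reflected kernel, then $C_2(\check u) = C_2(u)$. This holds because the reflection $(Rf)(k) := f(-k)$ is an isometry of $\ell^2(\Z)$ satisfying $R(\check u \ast f) = u \ast (Rf)$, and conjugating the forward Laplacian by $R$ turns it into the backward Laplacian $h \mapsto h(\cdot) - 2h(\cdot - 1) + h(\cdot - 2)$, which differs from $\Delta$ only by a shift and therefore has the same operator norm.

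Next I would define the symmetrization $\tilde u := \tfrac12(u + \check u)$, that is, $\tilde u(\ell) = \tfrac12\bigl(u(\ell) + u(-\ell)\bigr)$. This kernel is manifestly symmetric, and it is normalized since $\sum_\ell \tilde u(\ell) = \tfrac12\bigl(\sum_\ell u(\ell) + \sum_\ell u(-\ell)\bigr) = \tfrac12(1+1) = 1$. Combining sublinearity of $C_2$ with reflection invariance then yields $C_2(\tilde u) \le \tfrac12 C_2(u) + \tfrac12 C_2(\check u) = C_2(u)$. Since $\tilde u$ is a symmetric normalized kernel, the hypothesis (\ref{eq:sym-ineq}) gives $C_2(\tilde u) \ge \beta_n$, and hence $C_2(u) \ge C_2(\tilde u) \ge \beta_n$, which is (\ref{eq:sym-ineq}) for the arbitrary kernel $u$.

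I expect the only genuine obstacle to be the verification that $C_2(\check u) = C_2(u)$, since reflection does not commute with the \emph{forward} Laplacian; one must track carefully that conjugation by $R$ produces the backward Laplacian and observe that the backward operator is a shift of $\Delta$, so the two share an operator norm. Alternatively, once the Fourier-multiplier description $C_2(u) = \sup_{\theta}(2 - 2\cos\theta)\,\lvert \hat u(\theta)\rvert$ is available, the whole argument collapses: the symbol of $\tilde u$ is $\operatorname{Re}\hat u$, and $\lvert \operatorname{Re} z\rvert \le \lvert z\rvert$ gives $C_2(\tilde u) \le C_2(u)$ at once. However, since that description is derived later in Section~\ref{sec:avg-to-poly} and only for symmetric kernels, I would keep the present argument self-contained through the reflection isometry.
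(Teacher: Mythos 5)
Your proposal is correct and follows essentially the same route as the paper: both symmetrize via $\tilde u = \tfrac12(u + u^-)$, use sublinearity of the constant in the kernel, and use reflection invariance to conclude $C_2(\tilde u) \le C_2(u)$ before applying the hypothesis to $\tilde u$. If anything, your treatment of the reflection step is more careful than the paper's, which asserts $\ltwo{\Delta(u^- \ast f)} = \ltwo{\Delta(u \ast f^-)}$ without noting that conjugation by reflection turns the forward Laplacian into the backward one, a shift of $\Delta$ and hence isometric on $\ell^2(\Z)$.
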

\begin{proof}
	Let $u: \{-n, \cdots, n\} \to \R$ be any kernel normalized by $\sum_{k=-n}^n u(k) = 1$. For any function $g(k)$, define its reflection $g^-(k) = g(-k)$. Next consider the symmetrization kernel $\til{u}(k) = \f{1}{2}(u(k) + u^-(k))$, which will also satisfy $\sum_{k=-n}^n \til{u}(k) = 1$. Now take any $f \in \ell^2(\Z)$ and compute
	\begin{align*}
		\f{\ltwo{\Delta(\til{u} \ast f)}}{\ltwo{f}}
		&= \f{\ltwo{\Delta(\f{1}{2}(u + u^-) \ast f)}}{\ltwo{f}}\\
		&\leq \f{1}{2}\l(\f{\ltwo{\Delta(u \ast f)}}{\ltwo{f}} + \f{\ltwo{\Delta(u^- \ast f)}}{\ltwo{f}}\r)\\
		&= \f{1}{2}\l(\f{\ltwo{\Delta(u \ast f)}}{\ltwo{f}} + \f{\ltwo{\Delta(u \ast f^-)}}{\ltwo{f^-}}\r).
	\end{align*}
	Therefore we find that for all $f \in \ell^2(\Z)$, either
	\begin{equation*}
		\f{\ltwo{\Delta(u \ast f)}}{\ltwo{f}} \geq \f{\ltwo{\Delta(\til{u} \ast f)}}{\ltwo{f}}
		\quad \text{or} \quad
		\f{\ltwo{\Delta(u \ast f^{-})}}{\ltwo{f^{-}}} \geq \f{\ltwo{\Delta(\til{u} \ast f)}}{\ltwo{f}}.
	\end{equation*}
	Hence we can conclude
	\begin{align*}
		\sup_{f \neq 0}\f{\ltwo{\Delta(u \ast f)}}{\ltwo{f}}
		\geq \sup_{f \neq 0}\f{\ltwo{\Delta(\til{u} \ast f)}}{\ltwo{f}} \geq \beta_n.
	\end{align*}
\end{proof}

\subsection{From discrete kernels to polynomial extremizers}
\label{sec:avg-to-poly}
\begin{lemma}[Kravitz and Steinerberger \cite{smoothest-avg}]
	Given a symmetric and normalized kernel $u: \{-n, \cdots, n\} \to \R$, define the polynomial
	\begin{align*}
		p_u(x) = u(0) + \sum_{k=1}^n 2u(k)T_k(x)
	\end{align*}
	where $T_k(x)$ is the $k$th Chebyshev polynomial. Then,
	\begin{align*}
	\sup_{0 \neq f \in \ell^2(\Z)} \f{\ltwo{\Delta(u \ast f)}}{\ltwo{f}}
	= 2 \max_{-1 \leq x \leq 1}|(1-x)p_u(x)|.
	\end{align*}
	\label{prop:avg-to-poly}
\end{lemma}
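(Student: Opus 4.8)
The plan is to pass to the Fourier side, where convolution becomes multiplication and both $\Delta$ and $u$ become explicit multipliers, diagonalizing the operator $f \mapsto \Delta(u \ast f)$. Recall that the Fourier transform $\hat{f}(\theta) = \sum_{k \in \Z} f(k) e^{-ik\theta}$ is an isometry (up to a fixed constant that will cancel in every ratio below) from $\ell^2(\Z)$ onto $L^2([-\pi,\pi])$ by Parseval's identity, and that it turns the unit shift $k \mapsto k+1$ into multiplication by $e^{i\theta}$. First I would compute the two relevant symbols. Since $\Delta = S^2 - 2S + I$ where $S$ is the unit shift, its symbol is $(e^{i\theta}-1)^2$, whose modulus is $|e^{i\theta}-1|^2 = 2(1-\cos\theta)$. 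For the kernel, symmetry $u(k)=u(-k)$ together with the identity $T_k(\cos\theta) = \cos(k\theta)$ from (\ref{eq:cosines}) gives
\begin{equation*}
\hat{u}(\theta) = u(0) + \sum_{k=1}^n 2u(k)\cos(k\theta) = u(0) + \sum_{k=1}^n 2u(k)T_k(\cos\theta) = p_u(\cos\theta).
\end{equation*}

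Combining these, I would write the Fourier transform of $\Delta(u \ast f)$ as $(e^{i\theta}-1)^2 p_u(\cos\theta)\,\hat{f}(\theta)$, so that $f \mapsto \Delta(u \ast f)$ is exactly the Fourier multiplier with symbol $m(\theta) = (e^{i\theta}-1)^2 p_u(\cos\theta)$. By Parseval the ratio $\ltwo{\Delta(u\ast f)}/\ltwo{f}$ equals the $L^2$ ratio $\|m\hat{f}\|/\|\hat{f}\|$, and the crux of the argument is to show that the supremum of this ratio over nonzero $f$ equals the sup-norm $\|m\|_{L^\infty}$.

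The upper bound is immediate, since $\|m\hat{f}\|^2 = \int |m|^2|\hat{f}|^2 \leq \|m\|_{L^\infty}^2 \|\hat{f}\|^2$. For the matching lower bound I would exploit that $m$ is continuous on the compact interval $[-\pi,\pi]$, so $|m(\theta^*)| = \|m\|_{L^\infty}$ at some $\theta^*$, and then build concentrating test functions: take $\hat{f}$ to be the indicator of a small interval $\{\,|\theta - \theta^*| < \delta\,\}$, which is the Fourier transform of a genuine $\ell^2$ sequence, and on which continuity forces $|m| \geq \|m\|_{L^\infty} - \epsilon$ once $\delta$ is small. This makes the ratio exceed $\|m\|_{L^\infty} - \epsilon$ for every $\epsilon > 0$, establishing the claim. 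Finally I would evaluate $\|m\|_{L^\infty}$ directly: using $|m(\theta)| = 2(1-\cos\theta)|p_u(\cos\theta)|$, noting $m$ is even in $\theta$, and substituting $x = \cos\theta$ (which sweeps out $[-1,1]$ as $\theta$ ranges over $[0,\pi]$) together with $1-x \geq 0$ on $[-1,1]$, to obtain
\begin{equation*}
\|m\|_{L^\infty} = \max_{\theta} 2(1-\cos\theta)|p_u(\cos\theta)| = 2\max_{x \in [-1,1]}(1-x)|p_u(x)| = 2\max_{x \in [-1,1]}|(1-x)p_u(x)|,
\end{equation*}
which is the desired identity.

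I would flag the lower-bound step—identifying the $\ell^2 \to \ell^2$ operator norm of a Fourier multiplier with the sup-norm of its symbol—as the only genuinely nontrivial point, since it is where one must manufacture near-extremal functions rather than merely estimate. Everything else is bookkeeping with the two explicit symbols, and it is worth emphasizing that the supremum is a true supremum and not in general a maximum, as a maximizer would require $\hat{f}$ to concentrate as a point mass at $\theta^*$, which lies outside $\ell^2(\Z)$.
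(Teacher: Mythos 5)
Your proposal is correct and follows essentially the same route as the paper: pass to the Fourier side via Plancherel, identify $f \mapsto \Delta(u \ast f)$ as a multiplier with symbol $(e^{i\theta}-1)^2\hat{u}(\theta)$, show the operator norm equals the sup-norm of the symbol by concentrating $\hat{f}$ near the maximizer, and substitute $x = \cos\theta$ using $T_k(\cos\theta) = \cos(k\theta)$. Your treatment of the lower bound (explicit indicator functions plus continuity of the symbol) is in fact slightly more careful than the paper's brief remark about concentrating $L^2$ mass, and your observation that the supremum is not attained is a nice touch, but the argument is the same.
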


For the proof of Lemma~\ref{prop:avg-to-poly} we follow the argument given by Kravitz and Steinerberger \cite{smoothest-avg}, which uses the Fourier transform and Plancherel's theorem. Before the proof, first recall that a function $f: \Z \to \R$ on the integers has a continuous Fourier transform $\hat{f}: \T \to \C$ defined on the 1-torus given by
 	\begin{align*}
		\hat{f}(\xi) = \sum_{k \in \Z}f(k) e^{-i\xi k}.
	\end{align*}
This is called the discrete-time Fourier transform, which we will also denote by $\mc{F}(f)(\xi) = \hat{f}(\xi)$. This Fourier transform takes convolution to multiplication by
	\begin{align*}
		\hat{f \ast g} = \hat{f} \cdot \hat{g}.
	\end{align*}	
Another useful property is the Plancherel identity, which relates the inner product of functions with that of their Fourier transform by
	\begin{align*}
		\sum_{k \in \Z} f(k) \ol{g(k)} 
		= \f{1}{2\pi}\int_{\T}\hat{f}(\xi)\ol{\hat{g}(\xi)}d\xi.
	\end{align*}
Note we can express the Fourier transform of a shifted function $f(\cdot-m)$ by
	\begin{align*}
	\hat{f(\cdot-m)}(\xi)
	&= \sum_{k \in \Z}f(k-m)e^{-i\xi k} 
	= \sum_{k \in \Z}f(k)e^{-i\xi(k+m)}\\
	&= e^{-i\xi m} \sum_{k \in \Z}f(k)e^{-i\xi k}
	= e^{-i\xi m}\hat{f}(\xi).
	\end{align*}
Using the above, we can compute the Fourier transform of the discrete Laplacian.
	\begin{align*}
		\mc{F}(\Delta f)(\xi)
		&= \mc{F}(f(k+2)-2f(k+1)+f(k))(\xi)\\
		&= (e^{-2 i \xi}\hat{f}(\xi) - 2e^{-i\xi}\hat{f}(\xi) + \hat{f}(\xi)
		= (e^{-i\xi}-1)^2\hat{f}(\xi).
	\end{align*}

We are equipped to prove Lemma~\ref{prop:avg-to-poly}, but first we follow up with a claim made in the Introduction. Indeed, observe that by the first computation in the following proof, which does not yet use the symmetry of $u$, we can conclude quickly that $\ltwo{\Delta(u \ast f)} \leq C_2(u) \ltwo{f}$ for \emph{some} constant $C_2(u)$ depending continuously on $u$. Furthermore, because the space of normalized kernels ${u: \{-n, \cdots, n\} \to \R}$ is compact, we conclude that there exists some optimal kernel $u_n$ that minimizes $C_2(u)$. This argument can be easily generalized to higher derivatives.

\begin{proof}[Proof of Lemma~\ref{prop:avg-to-poly}]
	Let $u$ be a fixed symmetric, normalized kernel and let $f: \Z \to \R$ be any function in $\ell^2(\Z)$. Plancherel's identity allows us to equate $\ltwo{\Delta (u \ast f)}$ to an easier expression in terms of the Fourier transform by computing
	\begin{align}
		\sum_{k \in \Z}|(\Delta(u \ast f))(k)|^2 \nonumber
		&= \f{1}{2\pi} \int_{\T}|e^{-i\xi}-1|^4|\hat{u}(\xi)|^2 |\hat{f}(\xi)|^2 d\xi \nonumber\\
		&\leq \linfty{(e^{-i\xi}-1)^4\hat{u}(\xi)^2} \cdot \f{1}{2\pi}\int_{\T}|\hat{f}(\xi)|^2d\xi \label{eq:ineq}\\
		&= \linfty{(e^{-i\xi}-1)^4\hat{u}(\xi)^2} \cdot \sum_{k \in \Z}|f(k)|^2. \nonumber
	\end{align}
	After taking a square root we get 
	\begin{align*}
		\ltwo{\Delta(u \ast f)}
		\leq \linfty{(e^{-i\xi}-1)^2\hat{u}(\xi)} \cdot \ltwo{f}
	\end{align*}
	Note that the only inequality in the derivation of the above is in (\ref{eq:ineq}). Furthermore, by choosing $f(k)$ so that $\hat{f}(\xi)$ has $L^2$ mass concentrated at the $\xi$  in which the function $|e^{-i\xi}-1|^4|\hat{u}(\xi)|^2$ achieves it's maximum, we can make the above inequality arbitrary close to an equality.
	Thus we have shown our expression measuring the smoothing of the second derivative is equivalent to the following simpler expression:
	\begin{align*}
		\sup_{0 \neq f \in \ell^2(\Z)}\f{\ltwo{\Delta(u \ast f)}}{\ltwo{f}} = \linfty{(e^{-i\xi}-1)^2\hat{u}(\xi)}.
	\end{align*}
	Denote this simpler expression by
	\begin{align*}
		L(u) \coloneqq \|(e^{i\xi}-1)^2\hat{u}(\xi)\|_{L^{\infty}(\T)}
		= \max_{\xi \in [0,2\pi)} |e^{i\xi}-1|^2|\hat{u}(\xi)|.
	\end{align*}
	Because $u(k)$ is symmetric, real-valued, and only supported on $\{-n, \cdots, n\}$, we can rewrite $\hat{u}(\xi)$ by
	\begin{align*}
		\hat{u}(\xi) 
		= \sum_{k \in \Z} u(k) e^{-i\xi k}
		= u(0) + 2\sum_{k=1}^n u(k)\cos(\xi k).
	\end{align*}
	Therefore we find
	\begin{align*}
		L(u) = \max_{\xi \in [0,2\pi)}|e^{i\xi}-1|^2|\hat{u}(\xi)|
		= 2\max_{\xi \in [0,2\pi)}\l|(1-\cos(\xi))\l(u(0) + 2\sum_{k=1}^n u(k)\cos(\xi k)\r)\r|.
	\end{align*}
	The substitution $x = \cos(\xi)$ gives rise to
	\begin{align*}
		L(u) = 2\max_{-1 \leq x \leq 1} \l|(1-x)\l(u(0) + 2\sum_{k=1}^nu(k)T_k(x)\r)\r|
	\end{align*}
	where $T_k(x)$ denotes the Chebyshev polynomial of degree $k$. Therefore, defining the polynomial $p_u(x)$ by
	\begin{align*}
		p_u(x) = u(0) + \sum_{k=1}^n 2u(k)T_k(x),
	\end{align*}
	we have our desired equality
	\begin{align*}
		\sup_{0 \neq f \in \ell^2(\Z)}\f{\ltwo{\Delta(u \ast f)}}{\ltwo{f}}
		= L(u)
		=  2\max_{-1 \leq x \leq 1} |(1-x) p_u(x)|.
	\end{align*}
\end{proof}

\subsection{The Sharp Fourier Inequality and Optimal Kernel}
\begin{proof}[Proof of Theorem~\ref{thm:kernel-inequality}]
	For any symmetric and normalized $u: \{-n, \cdots, n\} \to \R$, define the degree $n$ polynomial
	\begin{align*}
		p_u(x) = u(0) + \sum_{k=1}^n 2u(k)T_k(x)
	\end{align*}
	as given in Lemma~\ref{prop:avg-to-poly}. Then note $p_u(1) = u(0) + \sum_{k=1}^n 2u(k)T_k(1) = 1$ by the normalization of $u$. Therefore combining Lemma~\ref{prop:avg-to-poly} and Theorem~\ref{thm:min-poly} yields
	\begin{align*}
		\sup_{0 \neq f \in \ell^2(\Z)} \f{\ltwo{\Delta(u \ast f)}}{\ltwo{f}}
		= 2 \max_{-1 \leq x \leq 1}|(1-x)p_u(x)|
		\geq \f{4\sin\l(\f{\pi}{2n+2}\r)}{(n+1)\l(1+\cos\l(\f{\pi}{2n+2}\r)\r)}.
	\end{align*}
	Because the above inequality holds for symmetric normalized kernels, Lemma~\ref{lemma:symmetric-kernels} implies this in fact holds for all normalized kernels.
	To see this inequality is sharp, let $S_n(x)$ be the optimal degree $n$ polynomial as given in Theorem~\ref{thm:min-poly}. Because the Chebyshev polynomials $\{T_0(x), \dots, T_n(x)\}$ form a basis for the space of all degree $n$ polynomials, there exists unique coefficients $\alpha_k$ so that
	\begin{align*}
		S_n(x) = \alpha_0T_0(x) + \sum_{k=1}^n 2\alpha_kT_k(x).
	\end{align*}
	These coefficients define a corresponding kernel $u_n: \{-n, \cdots, n\} \to \R$ by setting $u_n(k) = u_n(-k) = \alpha_k$ for $k \geq 0$. Using $T_k(1) = 1$, we find this kernel is properly normalized by computing
	\begin{align*}
		\sum_{k=-n}^n u_n(k) = \alpha_0 + 2\sum_{k=1}^n \alpha_k  = \alpha_0T_0(1) + \sum_{k=1}^n 2\alpha_kT_k(1) = S(1) = 1.
	\end{align*}
	Noting $T_0(x) \equiv 1$, we can rewrite write $S_n(x)$ as
	\begin{align*}
		S_{n}(x) = u_n(0) + \sum_{k=1}^n 2u_n(k)T_k(x).
	\end{align*}
	Using the equality condition in Theorem~\ref{thm:min-poly}, we find $u_n(k)$ indeed satisfies
	\begin{align*}
		\sup_{0 \neq f \in \ell^2(\Z)} \f{\ltwo{\Delta(u_n \ast f)}}{\ltwo{f}}
		= 2 \max_{-1 \leq x \leq 1}|(1-x)S_n(x)|
		= \f{4\sin\l(\f{\pi}{2n+2}\r)}{(n+1)\l(1+\cos\l(\f{\pi}{2n+2}\r)\r)}.
	\end{align*}
	
	To find an explicit expression for the minimizing kernel $u_n(k)$, recall that Chebyshev polynomials are orthogonal and in particular
	\begin{align*}
		\int_{-1}^{1} T_i(x) T_j(x) \f{dx}{\sqrt{1-x^2}} = 
		\begin{cases}
			0 \quad \text{if} \quad i\neq j,\\
			\pi \quad \text{if} \quad i = j = 0,\\
			\f{\pi}{2} \quad \text{if} \quad i = j \neq 0.\\
		\end{cases}
	\end{align*}
	Therefore for any $j$ we have
	\begin{align*}
		\int_{-1}^1 S_n(x)T_j(x) \f{dx}{\sqrt{1-x^2}} 
		= \int_{-1}^1\l(u_n(0) + \sum_{k=1}^n 2u_n(k)T_k(x)\r) \f{T_j(x) dx}{\sqrt{1-x^2}}
		= \pi \cdot u_n(j).
	\end{align*}
	Thus we can write $u_n(k)$ as
	\begin{align*}
		u_n(k) = \f{1}{\pi} \int_{-1}^1 S_n(x) T_k(x) \f{dx}{\sqrt{1-x^2}}
	\end{align*}
	where $T_k(x)$ is the $k$th Chebyshev polynomial, and
	\begin{align*}
		S_n(x) = \f{1}{x-1} \f{2\sin\l(\f{\pi}{2n+2}\r)}{(n+1)\l(1+\cos\l(\f{\pi}{2n+2}\r)\r)} T_{n+1}\l(\f{1+\cos\l(\f{\pi}{2n+2}\r)}{2}(x+1)-1\r).
	\end{align*}
\end{proof}

\subsection{The Epanechnikov Kernel}
This section is dedicated to proving Theorem~\ref{thm:epanechnikov}, which offers a kernel that is nearly optimal and easy to implement in practice. First we verify the Epanechnikov kernel $E_n: \{-n, \cdots, n\} \to \R$ satisfies the normalization requirement. Indeed, an induction argument gives the relation
\begin{align*}
	\sum_{k=-n}^n k^2 = \f{1}{3}n(n+1)(2n+1).
\end{align*} 
Therefore we can compute
\begin{align*}
	\sum_{k=-n}^{n} E_{n}(k)
	&= \sum_{k=-n}^{n}\f{3}{n(4 n^2 - 1)}\l(n^2-k^2\r)\\
	&= \f{3}{n(4n^2-1)}\l(n^2(2n+1)-\f{1}{3}n(n+1)(2n+1)\r) = 1.
\end{align*}

Next, note Lemma~\ref{prop:avg-to-poly} allows us to reduce the asymptotic relation of Theorem~\ref{thm:epanechnikov} to a claim about the polynomials
\begin{align*}
	p_n(x) = E_n(0) + 2\sum_{k=1}^{n} E_n(k)T_k(x).
\end{align*}
In particular, Lemma~\ref{prop:avg-to-poly} promises the equivalence
\begin{align*}
	\sup_{0 \neq f \in \ell^2(\Z)} \f{\ltwo{\Delta(E_n \ast f)}}{\ltwo{f}}
	= 2 \max_{-1 \leq x \leq 1}|(1-x)p_n(x)|.
\end{align*}
Therefore it suffices to show that as $n \to \infty$ we have the asymptotic equivalence
\begin{align*}
	\max_{-1 \leq x \leq 1}(1-x)p_n(x) \sim \f{3\mu}{2n^2}.
\end{align*}
For simplicity, we remove the normalizing coefficients of $p_n(x)$ and instead consider
\begin{align*}
	\til{p}_n(x) = \f{n(4n^2-1)}{3}p_n(x) = n^2+2\sum_{k=1}^{n-1}\l(n^2-k^2\r)T_k(x).
\end{align*}
Therefore it suffices to show the following asymptotic equivalence as $n \to \infty$:
\begin{align}
	\max_{-1 \leq x \leq 1}(1-x)\til{p}_n(x) \sim 2\mu n.
	\label{eq:polynomial-goal}
\end{align}
The first step in proving this asymptotic relation is to rewrite $(1-x)\til{p}(x)$ as follows.
\begin{lemma}
Define the polynomial
\[\til{p}_{n}(x) = n^2+2\sum_{k=1}^{n-1}\l(n^2-k^2\r)T_k(x).\]
Then,
\begin{equation}
(1-x)\til{p}_{n}(x) = \f{T_{n}(x)-T_{n-1}(x)}{x-1} + (1-2n)T_{n}(x).
\label{eq:equivalent-polynomial-form}
\end{equation}
\label{lemma:cheby-equivalence}
\end{lemma}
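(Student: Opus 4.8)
The plan is to prove the identity by induction on $n$, which neatly sidesteps the need to evaluate the awkward second-moment sum $\sum_{k=1}^{n-1} k^2 T_k(x)$ in closed form. Write $A_n(x) = (1-x)\tilde{p}_n(x)$ for the left-hand side and $B_n(x) = \frac{T_n(x) - T_{n-1}(x)}{x-1} + (1-2n)T_n(x)$ for the right-hand side; both are genuine polynomials, since $T_n(1) = T_{n-1}(1) = 1$ forces $(x-1)$ to divide $T_n - T_{n-1}$. I would establish the base case $A_1 = B_1$ and then show $A_{n+1} - A_n = B_{n+1} - B_n$ for all $n \geq 1$.

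The single ingredient I need beyond the three-term recurrence is the first-moment Chebyshev summation formula
\[
(x-1)\left(1 + 2\sum_{k=1}^{m} T_k(x)\right) = T_{m+1}(x) - T_m(x).
\]
This follows by telescoping: rewriting the recurrence as $(T_{k+1} - T_k) - (T_k - T_{k-1}) = 2(x-1)T_k$ and summing from $k=1$ to $m$ collapses the left side to $(T_{m+1} - T_m) - (T_1 - T_0) = (T_{m+1} - T_m) - (x-1)$, which rearranges to the claimed formula.

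For the inductive step I would first compute $\tilde{p}_{n+1} - \tilde{p}_n$ directly from the definition. Splitting off the top term, the coefficient change $((n+1)^2 - k^2) - (n^2 - k^2) = 2n+1$ is independent of $k$, so the difference collapses to $\tilde{p}_{n+1} - \tilde{p}_n = (2n+1)\left(1 + 2\sum_{k=1}^{n}T_k\right)$. Multiplying by $(1-x)$ and invoking the summation formula with $m = n$ gives $A_{n+1} - A_n = -(2n+1)(T_{n+1} - T_n)$. On the other side, the $(x-1)^{-1}$ terms in $B_{n+1} - B_n$ combine into $\frac{T_{n+1} - 2T_n + T_{n-1}}{x-1}$, and here the recurrence in the form $T_{n+1} + T_{n-1} = 2xT_n$ clears the denominator to leave $2T_n$; collecting the remaining terms yields exactly $-(2n+1)(T_{n+1} - T_n)$ as well. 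The base case $n=1$ is immediate, since $\tilde{p}_1 \equiv 1$ gives $A_1 = 1-x$, while $B_1 = \frac{T_1 - T_0}{x-1} - T_1 = 1 - x$.

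The only real subtlety, and the place I would be most careful, is the bookkeeping around the $(x-1)^{-1}$ factors: one must confirm at each stage that the expressions stay polynomial and that the denominator genuinely cancels, which is precisely what the two forms of the three-term recurrence guarantee. Everything else is a routine, if slightly delicate, index-shifting computation.
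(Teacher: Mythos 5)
Your proof is correct and takes essentially the same route as the paper's: both are inductions on $n$ built from the same two ingredients, namely the recurrence $\tilde{p}_{n+1}(x)-\tilde{p}_n(x) = (2n+1)\left(1+2\sum_{k=1}^{n}T_k(x)\right)$ and the summation identity $(x-1)\left(1+2\sum_{k=1}^{m}T_k(x)\right) = T_{m+1}(x)-T_m(x)$. The only cosmetic differences are that you prove the summation identity by telescoping rather than induction, and you difference both sides of the target identity (handling the $(x-1)^{-1}$ terms via $T_{n+1}-2T_n+T_{n-1}=2(x-1)T_n$), whereas the paper carries an intermediate sum form through the induction and substitutes the summation identity only at the end.
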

\begin{proof}
We first show the equality
\begin{equation}
	(1-x)\l(1+2\sum_{k=1}^{n-1}T_k(x)\r) = T_{n-1}(x)-T_n(x).
	\label{eq:sum-to-frac}
\end{equation}
This follows quickly by induction and the recurrence $T_{n+1}(x) = 2xT_n(x)-T_{n-1}(x)$. Indeed, for $n=1$ we find $(1-x)(1+2\cdot 0) = 1-x = T_0(x)-T_1(x).$
Furthermore, if (\ref{eq:sum-to-frac}) holds for $n$, then we find
\begin{align*}
	(1-x)\l(1+2\sum_{k=1}^n T_k(x)\r)
	&= (T_{n-1}(x) - T_n(x)) + 2(1-x)T_n(x)\\
	&= T_n(x)+(T_{n-1}(x)-2xT_n(x))\\
	&= T_n(x) - T_{n+1}(x).
\end{align*}
Due to (\ref{eq:sum-to-frac}), the claim follows so long as we can prove
\begin{equation}
(1-x)\til{p}_{n}(x) = \l(1 + 2\sum_{k=1}^{n-1}T_k(x)\r) + (1-2n)T_{n}(x).
\label{eq:1stequality}
\end{equation}
We prove (\ref{eq:1stequality}) by induction. Indeed, for $n=1$ we can immediately compute the  necessary relation
$(1-x)\til{p}_1 = 1-x = 1 + (1-2\cdot 1)T_1(x).$
Now suppose (\ref{eq:1stequality}) holds for $n$. First derive the following recurrence for $\til{p}_{n}$.
\begin{align*}
	\til{p}_{n+1}(x) 
	&= (n+1)^2+2\sum_{k=1}^n((n+1)^2-k^2)T_k(x)\\
	&= \l(n^2 + 2\sum_{k=1}^{n-1}(n^2-k^2)T_k(x)\r) + \l((2n+1) + 2\sum_{k=1}^{n}((n+1)^2-n^2)T_k(x)\r)\\
	&= \til{p}_{n}(x) + (2n+1)\l(1+2\sum_{k=1}^nT_k(x)\r).
\end{align*}
Therefore we find (\ref{eq:1stequality}) also holds for $n+1$ by the following computation, which uses our hypothesis and (\ref{eq:sum-to-frac}).
\begin{align*}
	(1-x)\til{p}_{n+1}(x) 
	&= (1-x)\l(\til{p}_{n}(x)+(2n+1)\l(1+\sum_{k=1}^nT_k(x)\r)\r)\\
	&= \l(1+2\sum_{k=1}^{n-1}T_k(x)+(1-2n)T_n(x)\r) + (2n+1)(T_{n}(x)-T_{n+1}(x))\\
	&= 1+2\sum_{k=1}^nT_k(x)+(1-2(n+1))T_{n+1}(x)
\end{align*}
\end{proof}
With this new form for $(1-x)\til{p}_{n}(x)$ in hand, we turn back to our objective of showing (\ref{eq:polynomial-goal}). For each $n$, we will bound the intervals $[-1,\cos\l(\f{16}{n}\r)]$ and $[\cos\l(\f{16}{n}\r),1]$ separately. Notice (\ref{eq:equivalent-polynomial-form}) is useful for bounding $[-1,\cos\l(\f{16}{n}\r)]$ because the polynomial $(T_n(x)-T_{n-1}(x))/(x-1)$ stays fairly small away from $1$, which the following claim quantifies.
\begin{lemma}
Over $[-1,1)$ we have
	\begin{align*}
		\l|\f{T_{n}(x) - T_{n-1}(x)}{x-1}\r| \leq \f{\sqrt{2}}{\sqrt{1-x}}.
	\end{align*}
	\label{lemma:beat-frequency-inequality}
\end{lemma}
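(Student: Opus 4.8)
The plan is to use the trigonometric substitution $x = \cos\theta$ for $\theta \in (0, \pi]$, which turns the Chebyshev polynomials into cosines via $T_n(\cos\theta) = \cos(n\theta)$. Under this substitution the numerator becomes $\cos(n\theta) - \cos((n-1)\theta)$ and the denominator becomes $\cos\theta - 1$. First I would apply the sum-to-product identity $\cos A - \cos B = -2\sin\left(\frac{A+B}{2}\right)\sin\left(\frac{A-B}{2}\right)$ to the numerator, giving $\cos(n\theta) - \cos((n-1)\theta) = -2\sin\left(\frac{(2n-1)\theta}{2}\right)\sin\left(\frac{\theta}{2}\right)$. For the denominator I would use $\cos\theta - 1 = -2\sin^2\left(\frac{\theta}{2}\right)$. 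Taking the ratio, the factor of $-2\sin(\theta/2)$ cancels cleanly, leaving
\begin{align*}
	\frac{T_n(x) - T_{n-1}(x)}{x-1} = \frac{\sin\left(\frac{(2n-1)\theta}{2}\right)}{\sin\left(\frac{\theta}{2}\right)}.
\end{align*}

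Next I would bound this ratio in absolute value. The numerator satisfies $\left|\sin\left(\frac{(2n-1)\theta}{2}\right)\right| \leq 1$ trivially, so the whole expression is bounded by $1/\sin(\theta/2)$. The task then reduces to showing $1/\sin(\theta/2) \leq \sqrt{2}/\sqrt{1-x}$. Since $1 - x = 1 - \cos\theta = 2\sin^2(\theta/2)$, we have $\sqrt{1-x} = \sqrt{2}\,|\sin(\theta/2)| = \sqrt{2}\sin(\theta/2)$ for $\theta \in (0,\pi]$, and therefore $\sqrt{2}/\sqrt{1-x} = \sqrt{2}/(\sqrt{2}\sin(\theta/2)) = 1/\sin(\theta/2)$. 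This shows the claimed bound holds with equality against the intermediate estimate, completing the proof.

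I expect no serious obstacle here; the argument is essentially a direct computation once the substitution is made. The only point requiring a little care is the sign and domain bookkeeping: one must confirm that $\sin(\theta/2) > 0$ on the relevant range $\theta \in (0,\pi]$ (equivalently $x \in [-1,1)$) so that the cancellation of $\sin(\theta/2)$ and the identification $\sqrt{1-x} = \sqrt{2}\sin(\theta/2)$ are valid without absolute-value complications, and that the endpoint $x \to 1$ (i.e. $\theta \to 0$) is correctly excluded since both sides blow up there. I would note that the numerator bound $|\sin(\cdot)| \le 1$ is typically not tight, so the stated inequality is genuinely an inequality rather than an identity, which is all that is needed for the subsequent interval estimate on $[-1, \cos(16/n)]$.
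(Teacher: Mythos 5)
Your proposal is correct and follows essentially the same route as the paper: the substitution $x=\cos\theta$, the sum-to-product identity $\cos(n\theta)-\cos((n-1)\theta)=-2\sin\bigl(\tfrac{2n-1}{2}\theta\bigr)\sin\bigl(\tfrac{\theta}{2}\bigr)$, the trivial bound $|\sin(\cdot)|\le 1$, and the identification $1-x=2\sin^2(\theta/2)$. The only cosmetic difference is that you cancel one factor of $\sin(\theta/2)$ in the ratio before bounding, whereas the paper bounds the numerator by $\sqrt{2}\sqrt{1-\cos\theta}$ and then divides by $1-x$; the steps are the same in a different order.
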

\begin{proof}
	Write $T_{n}(x)-T_{n-1}(x) = \cos(n\theta)-\cos((n-1)\theta)$ by making the substitution $x = \cos\theta$. Then apply trigonometric identities to get the bound
	\begin{align*}
		|\cos(n\theta)-\cos((n-1)\theta)|
		&= \l|2\sin\l(\f{2n-1}{2}\theta\r)\sin\l(\f{\theta}{2}\r)\r|\\
		&\leq 2\l|\sin\l(\f{\theta}{2}\r)\r| = \sqrt{2} \cdot \sqrt{1-\cos\theta}.
	\end{align*}
	Therefore, substituting back to $x = \cos(\theta)$ we find
	\begin{align*}
		\l|\f{T_{n}(x) - T_{n-1}(x)}{x-1}\r|
		\leq \f{\sqrt{2}\sqrt{1-x}}{1-x} = \f{\sqrt{2}}{\sqrt{1-x}}.
	\end{align*}
\end{proof}
Now let $x \in [-1, \cos\l(\f{16}{n}\r)]$ and use (\ref{eq:equivalent-polynomial-form}) to bound
\begin{align}
	|(1-x)\til{p}_n(x)|
	&\leq (2n-1)|T_n(x)| + \l|\f{T_n(x)-T_{n-1}(x)}{x-1}\r| \label{eq:pn-ineq-chain}
\\ 
	&\leq (2n-1) + \f{\sqrt{2}}{\sqrt{1-x}}
	\leq (2n-1) + \f{\sqrt{2}}{\sqrt{1-\cos\l(\f{16}{n}\r)}}.\nonumber
\end{align}

We can simplify this further by taking the Taylor expansion of cosine about $0$ and computing the following limit.
\begin{align*}
	\lim_{n \to \infty} (2n-1)\f{\sqrt{1-\cos\l(\f{16}{n}\r)}}{\sqrt{2}}
	= \lim_{n \to \infty} \sqrt{\f{(2n-1)^2}{2}\l(\f{1}{2}\l(\f{16}{n}\r)^2 + O\l(\f{1}{n^{4}}\r)\r)}
	= 16.
\end{align*}
Therefore, returning to the inequality chain (\ref{eq:pn-ineq-chain}), we have for $x \in [-1, \cos\l(\f{16}{n}\r)]$ the following asymptotic equivalence as $n \to \infty$:
\begin{align*}
	|(1-x)\til{p}_n(x)| \sim (2n-1)\l(1+\f{1}{16}\r) \leq 2\mu n.
\end{align*}

Where the last inequality follows from $1+\f{1}{16} \leq \mu \approx 1.063$ by (\ref{eq:mu-def}). We have shown
$$\limsup_{n \to \infty} \max_{-1 \leq x \leq \cos\l(\f{16}{n}\r)} \f{1}{2n} (1-x)\til{p}_n(x) \leq \mu.$$

Therefore if only we can show the asymptotic equivalence
\begin{equation}
	\max_{\cos\l(\f{16}{n}\r) \leq x \leq 1} (1-x) \til{p}_n(x) \sim 2\mu n
	\label{eq:2nd-interval}
\end{equation}
as $n \to \infty$, then (\ref{eq:polynomial-goal}) and hence Theorem~\ref{thm:epanechnikov} follows. Substitute $x = \cos\l(\f{\alpha}{n}\r)$ and we claim uniform convergence
$$\f{1}{2n} \l(1-\cos\l(\f{\alpha}{n}\r)\r) \cdot \til{p}_n\l(\cos \l(\f{\alpha}{n}\r)\r) \to \f{\sin(\alpha)}{\alpha} - \cos(\alpha).$$ 
If this is true, the result follows immediately because uniform convergence allows the following interchange of maximum and limit.
\begin{align*}
	\lim_{n \to \infty} \f{1}{2n} \max_{\cos\l(\f{16}{n}\r) \leq x \leq 1}  |(1-x)\til{p}_n(x)|
	&= \lim_{n \to \infty}\max_{\alpha \in [0, 16]} \f{1}{2n} \l|\l(1-\cos\l(\f{\alpha}{n}\r)\r)\til{p}_n\l(\cos\l(\f{\alpha}{n}\r)\r)\r|\\
	&= \max_{\alpha \in [0,16]}\lim_{n \to \infty}\f{1}{2n} \l|\l(1-\cos\l(\f{\alpha}{n}\r)\r)\til{p}_n\l(\cos\l(\f{\alpha}{n}\r)\r)\r|\\
	&= \max_{\alpha \in [0,16]}\l|\f{\sin(\alpha)}{\alpha}-\cos(\alpha)\r| = \mu.
\end{align*}
Thus the following claim is the only remaining barrier to the proof of Theorem~\ref{thm:epanechnikov}.

\begin{lemma}
	We have uniform convergence
	$$\f{1}{2n} \l(1-\cos\l(\f{\alpha}{n}\r)\r) \til{p}_n\l(\cos\l(\f{\alpha}{n}\r)\r) \to \f{\sin(\alpha)}{\alpha} - \cos(\alpha)$$ as $n \to \infty$ over $\alpha \in [0,16]$.
	\label{lemma:uniform-convergence}
\end{lemma}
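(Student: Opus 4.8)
The plan is to make everything explicit using the closed form (\ref{eq:equivalent-polynomial-form}) from Lemma~\ref{lemma:cheby-equivalence} together with the defining identity $T_k(\cos\theta) = \cos(k\theta)$. First I would set $\theta = \alpha/n$ and substitute $x = \cos\l(\f{\alpha}{n}\r)$ into (\ref{eq:equivalent-polynomial-form}). Since $T_n(\cos\theta) = \cos(n\theta) = \cos\alpha$ and $T_{n-1}(\cos\theta) = \cos((n-1)\theta) = \cos\l(\alpha - \f{\alpha}{n}\r)$, the term $(1-2n)T_n(x)$ becomes simply $(1-2n)\cos\alpha$. For the quotient $\f{T_n(x)-T_{n-1}(x)}{x-1}$ I would apply the sum-to-product identity $\cos A - \cos B = -2\sin\l(\f{A+B}{2}\r)\sin\l(\f{A-B}{2}\r)$ to the numerator and $\cos\theta - 1 = -2\sin^2\l(\f{\theta}{2}\r)$ to the denominator, which collapses the quotient to $\f{\sin\l(\f{(2n-1)\theta}{2}\r)}{\sin\l(\f{\theta}{2}\r)} = \f{\sin\l(\alpha - \f{\alpha}{2n}\r)}{\sin\l(\f{\alpha}{2n}\r)}$.

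Multiplying by $\f{1}{2n}$, the expression of interest becomes
\begin{equation*}
\f{1}{2n}\l(1-\cos\l(\f{\alpha}{n}\r)\r)\til{p}_n\l(\cos\l(\f{\alpha}{n}\r)\r) = \f{1}{2n}\cdot\f{\sin\l(\alpha-\f{\alpha}{2n}\r)}{\sin\l(\f{\alpha}{2n}\r)} + \f{1-2n}{2n}\cos\alpha.
\end{equation*}
The second summand converges uniformly to $-\cos\alpha$ because $\f{1-2n}{2n} \to -1$ and $|\cos\alpha| \leq 1$. For the first summand I would rewrite it as $\f{\alpha/(2n)}{\sin(\alpha/(2n))}\cdot\f{\sin(\alpha - \alpha/(2n))}{\alpha}$, which identifies the pointwise limit as $\f{\sin\alpha}{\alpha}$, giving the claimed limit $\f{\sin\alpha}{\alpha} - \cos\alpha$.

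The main work is upgrading this to uniform convergence on $[0,16]$, and the only delicate point is the apparent singularity at $\alpha = 0$ coming from the $1/\sin(\alpha/(2n))$ factor. To control it I would split the error of the first summand as
\begin{equation*}
\f{\alpha/(2n)}{\sin(\alpha/(2n))}\cdot\f{\sin(\alpha-\alpha/(2n))}{\alpha} - \f{\sin\alpha}{\alpha} = \l(\f{\alpha/(2n)}{\sin(\alpha/(2n))}-1\r)\f{\sin(\alpha-\alpha/(2n))}{\alpha} + \f{\sin(\alpha-\alpha/(2n))-\sin\alpha}{\alpha}.
\end{equation*}
For $\alpha \in [0,16]$ the quantity $\alpha/(2n)$ lies in $[0, 8/n]$ and tends to $0$ uniformly, so $\f{\alpha/(2n)}{\sin(\alpha/(2n))} - 1 \to 0$ uniformly; since $\l|\f{\sin(\alpha-\alpha/(2n))}{\alpha}\r| \leq 1$ by $|\sin t|\leq|t|$, the first piece vanishes uniformly. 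The second piece is bounded by $\f{1}{2n}$ using $|\sin A - \sin B| \leq |A-B|$ with $|A-B| = \alpha/(2n)$, which also vanishes uniformly. Both estimates remove the singularity at the origin precisely because the $1/\alpha$ is always paired with a sine difference of size $O(\alpha/n)$, so no blow-up occurs near $\alpha = 0$. Combining these uniform bounds with the uniform convergence of the $\cos\alpha$ term completes the proof.
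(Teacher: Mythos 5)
Your proposal is correct and follows essentially the same route as the paper: both substitute into the closed form from Lemma~\ref{lemma:cheby-equivalence}, collapse the quotient via sum-to-product identities to $\sin\bigl(\tfrac{2n-1}{2n}\alpha\bigr)/\sin\bigl(\tfrac{\alpha}{2n}\bigr)$, and establish uniformity using the Lipschitz bound $|\sin A - \sin B| \leq |A-B|$ together with $t/\sin t \to 1$ uniformly on $[0,8/n]$. Your explicit add-and-subtract error split is just an unpacked version of the paper's appeal to the product rule for bounded, uniformly convergent sequences, so the two arguments are the same in substance.
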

\begin{proof}
	Use (\ref{eq:equivalent-polynomial-form}) and trigonometric identities to rewrite the sequence of functions as
	\begin{align*}
		\f{1}{2n} \l(1-\cos\l(\f{\alpha}{n}\r)\r)\til{p}_n\l(\cos\l(\f{\alpha}{n}\r)\r)
		&= \f{1}{2n} \f{\cos(\alpha)-\cos\l(\f{n-1}{n}\alpha\r)}{\cos(\f{\alpha}{n})-1} + \f{1-2n}{2n}\cos(\alpha)\\
		&= \f{1}{n} \f{\sin\l(\f{2n-1}{2n}\alpha\r)\sin\l(\f{\alpha}{2n}\r)}{1-\cos\l(\f{\alpha}{n}\r)} - \f{2n-1}{2n}\cos(\alpha)\\
		&= \f{1}{n}\f{\sin\l(\f{2n-1}{2n}\alpha\r)}{2\sin\l(\f{\alpha}{2n}\r)}- \f{2n-1}{2n}\cos(\alpha)\\
	\end{align*}
	Because $\f{2n-1}{2n} \cos(\alpha) \to \cos(\alpha)$ uniformly, we only must show that
	\begin{align*}
		\f{1}{n}\f{\sin\l(\f{2n-1}{2n}\alpha\r)}{2\sin\l(\f{\alpha}{2n}\r)} \to \f{\sin(\alpha)}{\alpha}
	\end{align*}
	uniformly. We break this into two parts, first using that sine is Lipschitz with constant $1$ to compute
	\begin{align*}
		\l|\f{\sin\l(\f{2n-1}{2n}\alpha\r)}{\alpha} - \f{\sin(\alpha)}{\alpha}\r|
		\leq \l|\f{\f{2n-1}{2n}\alpha - \alpha}{\alpha}\r|
		= \l|\f{2n-1}{2n}-1\r|.
	\end{align*}
	 Notice the right hand side of the above inequality converges uniformly to $0$ as $n \to \infty$, giving the uniform convergence $\f{1}{\alpha} \sin(\f{2n-1}{2n}\alpha) \to \f{1}{\alpha}\sin (\alpha)$.
	Secondly, use Taylor's theorem to get the following uniform convergence over $\alpha \in [0,16]$:
	\begin{align*}
		\f{\alpha}{2n\sin\l(\f{\alpha}{2n}\r)}
		= \f{\alpha}{\alpha+O((\f{\alpha}{n})^3)}
		= \f{1}{1+O(\f{\alpha^2}{n^3})} \to 1.
	\end{align*}
	Because $1$ and $\f{1}{\alpha}\sin(\alpha)$ are bounded over $[0,16]$, the product of the sequences converges uniformly to the product of the limits and so we get uniform convergence
	\begin{align*}
	\f{1}{n}\f{\sin\l(\f{2n-1}{2n}\alpha\r)}{2\sin\l(\f{\alpha}{2n}\r)}
	= \f{\sin\l(\f{2n-1}{2n}\alpha\r)}{\alpha} \cdot \f{\alpha}{2n\sin\l(\f{\alpha}{2n}\r)} \to \f{\sin \alpha}{\alpha}.
	\end{align*}
\end{proof}	
	
\section*{Acknowledgements}
	This material is based upon work supported by the National Science Foundation Graduate Research Fellowship under Grant No. DGE-2140004. Additional thanks to Stefan Steinerberger for bringing this problem to the author's attention and for helpful conversations.

\bibliographystyle{plain}
\bibliography{bib}

\end{document}